\documentclass[12pt]{amsart}

\usepackage{mathrsfs, amssymb,amsthm, amsfonts, amsmath}
\usepackage[all]{xy}
\usepackage{upgreek}
\usepackage{amsmath}

\usepackage{tabularx,multirow,makecell}
\newcolumntype{Y}{>{\centering\arraybackslash}X}
\usepackage{setspace}
\sloppy \pagestyle{plain}

\textwidth=16cm \textheight=23cm \oddsidemargin=0cm
\evensidemargin=0cm \topmargin=-20pt

\pagenumbering{arabic}

\newcounter{cequation}[section]

\newtheorem{theorem}[cequation]{Theorem}
\newtheorem*{theorem*}{Theorem}
\newtheorem{lemma}[cequation]{Lemma}
\newtheorem{corollary}[cequation]{Corollary}

\newtheorem{question}[cequation]{Question}

\theoremstyle{definition}

\newtheorem*{definition*}{Definition}

\theoremstyle{remark}
\newtheorem{remark}[cequation]{Remark}

\makeatletter\@addtoreset{equation}{section}
\makeatother

\makeatother

\usepackage{textcomp}
\usepackage{hyperref}

\textwidth=16cm \textheight=23cm \oddsidemargin=0cm
\evensidemargin=0cm \topmargin=-20pt

\newcommand{\CC}{\mathbb{C}}
\newcommand{\RR}{\mathbb{R}}

\newcommand{\ZZ}{\mathbb{Z}}
\newcommand{\PP}{\mathbb{P}}

\newcommand{\WWW}{{\mathscr{W}}}

\newcommand{\ad}{\mathrm{a}}

\newcommand{\Aut}{\operatorname{Aut}}
\newcommand{\GL}{\operatorname{GL}}

\newcommand{\Norm}{\operatorname{Norm}}

\newcommand{\Pic}{\operatorname{Pic}}
\newcommand{\chit}{\chi_{\mathrm{top}}}

\def \ge {\geqslant}
\def \le {\leqslant}

\date{}

\title{Finite groups acting on elliptic surfaces}

\author{Constantin Shramov}

\address{Steklov Mathematical Institute of Russian Academy of Sciences, 8 Gubkina st.,
Moscow, 119991, Russian Federation
\newline
National Research University Higher School of Economics, 
Russian Federation}

\email{costya.shramov@gmail.com}

\thanks{This work is supported by the Russian Science Foundation under grant \textnumero 18-11-00121.}

\begin{document}

\begin{abstract}
We show that automorphism groups of Hopf and Kodaira surfaces have unbounded finite
subgroups. For elliptic fibrations on Hopf, Kodaira, bielliptic, and~$K3$ surfaces, we
make some observations on finite groups acting along the fibers and on the base of such
a fibration.
\end{abstract}

\keywords{Elliptic surface, Hopf surface, Kodaira surface, automorphism group}

\subjclass[2010]{14J50}

\maketitle

\section{Introduction}

Automorphism groups of geometric objects may have a complicated structure. Sometimes
they appear to be more accessible on the level of their finite subgroups. In particular,
this frequently happens for groups of automorphisms and birational automorphisms
of algebraic varieties, compact complex manifolds, etc.

We say that a group $\Gamma$
\emph{has bounded finite subgroups}
if there exists a constant $B=B(\Gamma)$ such that
for any finite subgroup
$G\subset\Gamma$ one has $|G|\leqslant B$. If this is not the case,
we say that $\Gamma$ has \emph{unbounded finite subgroups}.
There are several interesting situations when automorphism groups and, more generally,
birational automorphism groups of algebraic varieties have bounded finite subgroups.
For instance, this is the case for non-uniruled varieties with vanishing irregularity
over fields of characteristic zero
(see~\cite[Theorem~1.8(i)]{ProkhorovShramov-Bir}),
for many non-ruled projective surfaces over fields of characteristic zero
(see~\cite[Lemma~3.5]{ProkhorovShramov-dim3}),
for varieties over number fields
(see~\cite[Theorem~1.4]{ProkhorovShramov-Bir} together with~\mbox{\cite[Theorem~1.1]{Birkar}}),
and for non-trivial Severi--Brauer
surfaces over fields of characteristic zero containing all roots of unity
(see~\mbox{\cite[Corollary~1.5]{ShramovVologodsky}}).

Our goal is to study finite subgroups of automorphism groups
of \emph{compact complex surfaces}, that is, connected compact complex manifolds of dimension~$2$.
Recall that such a surface is called \emph{minimal} if it does not contain smooth rational
curves with self-intersection equal to~$(-1)$.
For a classification of minimal compact complex surfaces, known as
Enriques--Kodaira classification, we refer the reader to~\mbox{\cite[Chapter~VI]{BHPV-2004}}.

There are several classes of minimal compact complex surfaces that have elliptic fibrations equivariant
with respect to their automorphism groups. These include Kodaira surfaces and other surfaces of algebraic
dimension~$1$; in this case the elliptic fibration is given by algebraic reduction.
Another example is provided by bielliptic surfaces; in this case the elliptic fibration is given by
the Albanese map.

If $\phi\colon X\to C$
is an $\Aut(X)$-equivariant fibration, there appears an exact sequence of groups
\begin{equation}\label{eq:elliptic-fibration}
1\to\Aut(X)_\phi\to\Aut(X)\to\Delta\to 1,
\end{equation}
where the action of the group $\Aut(X)_\phi$ is fiberwise with respect to $\phi$, and
$\Delta$ is a subgroup of $\Aut(C)$.

The goal of this note is to prove the following
result concerning finite groups acting on surfaces of the above types,
and their relation to natural elliptic fibrations on these surfaces.

\begin{theorem}
\label{theorem:main}
\begin{itemize}
\item[(i)]
Let $X$ be either a (primary or secondary) Hopf surface, or a (primary or secondary) Kodaira surface,
or a complex torus, or a bielliptic surface.
Then the automorphism group of $X$ has unbounded finite subgroups.

\item[(ii)]
Let $X$ be a primary Hopf surface of algebraic dimension $1$,
or a two-dimensional complex torus of algebraic dimension $1$, and let $\phi$
be its algebraic reduction.
Then both groups $\Aut(X)_\phi$ and $\Delta$ have unbounded finite subgroups.

\item[(iii)]
Let $X$ be a secondary Hopf surface of algebraic dimension $1$,
and let $\phi$ be its algebraic reduction.
Then the group $\Aut(X)_\phi$ has unbounded finite subgroups.
Furthermore, the group $\Delta$ is finite if and only if $\phi$ has at least three multiple fibers; otherwise $\Delta$ has unbounded finite subgroups.

\item[(iv)]
Let $X$ be a (primary or secondary) Kodaira surface, and let $\phi$
be its algebraic reduction.
Then the group $\Aut(X)_\phi$ has unbounded finite subgroups, while
the group~$\Delta$ is finite.

\item[(v)]
Let $X$ be a complex $K3$ surface of algebraic dimension $1$, and let $\phi$
be its algebraic reduction.
Then the group $\Aut(X)_\phi$ has bounded finite subgroups,
and the group~$\Delta$ is finite.

\item[(vi)]
Let $X$ be a bielliptic surface, and let~$\phi$ be its
Albanese fibration. Then the group~\mbox{$\Aut(X)_\phi$} is finite, while
the group~$\Delta$ has unbounded finite subgroups.
\end{itemize}
\end{theorem}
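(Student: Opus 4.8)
The plan is to treat each surface class through an explicit uniformization and to reduce every assertion to an analysis of the normalizer of the uniformizing group inside an affine group, together with the two actions it induces through \eqref{eq:elliptic-fibration}: along the fibers of $\phi$ and on the base. In each case the finite subgroups either come from the torsion of a positive-dimensional subgroup of $\Aut(X)$, or are controlled by the orbifold structure of the base curve $C$.

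For part (i) I would exhibit, in each case, a positive-dimensional algebraic subgroup of $\Aut(X)$ whose torsion already forms unbounded finite subgroups. For a complex torus $X=\CC^2/\Lambda$ the translations by $n$-torsion points give subgroups isomorphic to $(\ZZ/n)^{4}$, and for a bielliptic surface $X=(A\times B)/G$ the translations of the first factor descend and contribute all the torsion of the Albanese elliptic curve. For a primary Hopf surface one writes $X=(\CC^2\setminus\{0\})/\langle g\rangle$ with $g$ contracting and checks that the centralizer of $g$ contains a one-parameter subgroup isomorphic to $\CC^{*}$ (the diagonal torus in the diagonal case, and the group $(x,y)\mapsto(t^{m}x,ty)$ in the non-diagonal case); its torsion descends to $X$. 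Secondary Hopf and Kodaira surfaces are finite free quotients of primary ones, so I would take the same $\CC^{*}$, check that it commutes with the finite deck group and therefore descends to the quotient, again producing unbounded cyclic subgroups.

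For parts (ii)--(iv) and (vi) the sequence \eqref{eq:elliptic-fibration} splits the problem into the fiberwise part $\Aut(X)_\phi$ and the base part $\Delta$. The group $\Aut(X)_\phi$ contains the translations along the smooth elliptic fibers, so its finite subgroups are unbounded in every case except the bielliptic one; there the coupling of fiber and base forced by $G$ collapses $\Aut(X)_\phi$ to a finite group, which I would verify by writing the fiberwise automorphisms as $G$-equivariant translations of $B$ and checking that only finitely many survive. The group $\Delta\subset\Aut(C)$ is governed by the orbifold structure of the base: it preserves the $j$-invariant function and the divisor of multiple fibers with their multiplicities. When $C=\PP^1$ this reduces to bounding a subgroup of $\PGL_2(\CC)$ preserving a finite marked set, which is finite as soon as the marked set has at least three points but contains a $\CC^{*}$ when it has at most two; this yields the dichotomy in (iii) in terms of the number of multiple fibers. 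For (ii) the base carries at most two special points in the Hopf case, and is an elliptic curve all of whose translations lift in the torus case, so $\Delta$ is unbounded, while for the Kodaira surfaces of (iv) I would show that the base automorphisms extending to $X$ form a finite group.

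Part (v) is of a different nature: by the Torelli theorem every automorphism of a $K3$ surface acts faithfully on $H^{2}(X,\ZZ)$ preserving the intersection form, so $\Aut(X)$ embeds into the arithmetic group $O(H^{2}(X,\ZZ))$, whose finite subgroups have bounded order by Minkowski's theorem; in particular $\Aut(X)_\phi$ has bounded finite subgroups. For the base, the algebraic reduction $\phi\colon X\to\PP^1$ of an algebraic-dimension-one $K3$ surface has nonconstant $j$-invariant and at least three singular fibers, so $\Delta\subset\PGL_2(\CC)$ preserves a finite set of at least three points and is therefore finite. I expect the main obstacle throughout to be the precise control of $\Delta$: establishing finiteness of the lifting of base automorphisms for the Kodaira surfaces in (iv), and the exact three-multiple-fibers threshold in (iii), requires a careful description of which automorphisms of the base actually extend to $X$, and this is where the delicate bookkeeping with the uniformizing group and its multiple fibers has to be carried out.
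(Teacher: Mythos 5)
Your overall strategy coincides with the paper's: explicit uniformizations, positive-dimensional groups commuting with the deck group whose torsion yields unbounded finite subgroups, and control of $\Delta$ via marked points on the base. But several assertions are left at the level of intentions exactly where the real work lies, and in two places the proposed mechanism would not work as stated. First, for the finiteness of $\Delta$ in (iv) for a \emph{primary} Kodaira surface your orbifold argument gives nothing: the base $C$ is an elliptic curve and the algebraic reduction is a principal elliptic bundle with no singular or multiple fibers, so there is no finite marked set for $\Delta$ to preserve, and $\Aut(C)$ itself has unbounded finite subgroups. The paper instead uses the explicit form (Fujimoto--Nakayama) of any automorphism on the universal cover $\CC^2$, namely $(z,w)\mapsto(\alpha z+\beta, w+F(z))$ with $\beta=0$ forced when $\alpha=1$; hence any automorphism inducing a translation of $C$ acts trivially on $C$, so $\Delta$ meets the translation subgroup trivially and is therefore finite. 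You need this (or an equivalent) structural input. Second, the ``only if'' half of the dichotomy in (iii): knowing that a subgroup of $\PGL_2(\CC)$ preserving at most two points can be infinite does not produce automorphisms of $Y$ realizing it. The paper invokes the classification of secondary Hopf surfaces whose algebraic reduction has at most two multiple fibers ($Y\cong\WWW/\Gamma$ with $\Gamma$ generated by diagonal transformations), after which scalar matrices descend to $Y$ and have arbitrarily large image in $\Delta$.

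There are smaller but genuine issues elsewhere. For secondary Hopf surfaces the deck group need not lie in $\GL_2(\CC)$, and when it does and $\alpha=\beta$ it need not be diagonal; so ``the same $\CC^*$ commutes with the finite deck group'' requires Kato's structure theorem in the first case and a normalizer computation in the second (the normalizer of the cyclic group generated by $\mathrm{diag}(\alpha,\beta)$ is the diagonal torus if $\alpha\neq\beta$ and all of $\GL_2(\CC)$ if $\alpha=\beta$). For (vi), the fiberwise automorphisms are not ``$G$-equivariant translations of $F$'': the paper lifts $\Aut(X)_\phi$ to $\Aut(E)\times\Aut(F)$ via Bennett--Miranda and uses that its image in $\Aut(F)$ normalizes a finite group acting non-freely on an elliptic curve, whose normalizer is finite. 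Finally, in (v) the key fact that every elliptic fibration on a $K3$ surface has at least three singular fibers is asserted rather than proved (the paper derives it from $\chit(X)=24$ together with the signature $(3,19)$ of $H^2(X,\ZZ)$), and the ``nonconstant $j$-invariant'' claim is neither needed nor justified.
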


In all the assertions of Theorem~\ref{theorem:main} one can replace automorphism
groups by birational automorphism groups.
Indeed, the surfaces we consider
are minimal, and are neither rational nor ruled. Thus every birational automorphism
of such a surface is actually biregular, see
for instance~\mbox{\cite[Proposition~3.5]{ProkhorovShramov-CCS}}.

Note that the automorphism group of a complex $K3$ surface has bounded finite subgroups,
see~\cite[Lemma~8.8]{ProkhorovShramov-CCS}. However, this is not enough to
deduce assertion~(v) of Theorem~\ref{theorem:main}. Indeed, a quotient of a group with bounded finite subgroups may
fail to have bounded finite subgroups. To see this one can
take an arbitrary group with unbounded finite subgroups and represent it as a quotient of a free group.
Note also that there exists a $K3$ surface $X$
of algebraic dimension $1$ such that the group $\Aut(X)$ is infinite, see~\cite[Theorem~7.1]{Oguiso}.
Thus, for such a surface the group $\Aut(X)_\phi$ is infinite as well.

All surfaces mentioned in Theorem~\ref{theorem:main} have non-positive Kodaira
dimension. On the other hand, compact complex surfaces of Kodaira
dimension~$1$ are always elliptic. This leads to the
following question.

\begin{question}
What is the analog of Theorem~\ref{theorem:main} for (minimal)
compact complex surfaces of Kodaira dimension~$1$?
\end{question}

There is one class of compact complex surfaces of negative Kodaira dimension whose construction is
quite similar to that of Kodaira surfaces, namely, Inoue surfaces. In particular, their automorphism
groups share certain nice properties, see~\cite{InoueKodaira}.
Although Inoue surfaces are never elliptic (so that most of the observations of the current paper
have no analogs in the case of Inoue surfaces), the following question looks interesting.

\begin{question}
Do automorphism groups of Inoue surfaces have bounded finite subgroups?
\end{question}

\smallskip
\textbf{Acknowledgements.}
I am grateful to S.\,Gorchinskiy, S.\,Nemirovski,  Yu.\,Prokhorov, and E.\,Yasinsky for useful discussions.

\section{Hopf surfaces}
\label{section:Hopf}

In this section we study automorphism groups of Hopf surfaces.
Given a compact complex surface~$X$,
we denote by~$\ad(X)$ the algebraic dimension of~$X$.

Recall that a \emph{Hopf surface} $X$ is a compact complex surface
whose universal cover is
isomorphic to $\WWW=\CC^2\setminus\{0\}$.
Thus $X\cong \WWW/\Gamma$, where $\Gamma\cong \pi_1(X)$
is a group acting freely on $\WWW$.
A Hopf surface $X$ is said to be \emph{primary} if~\mbox{$\pi_1(X)\cong \ZZ$}.
One can show that a primary Hopf surface is
isomorphic to a quotient
$$
X(\alpha,\beta,\lambda,n)=\WWW/\Lambda,
$$
where
$\Lambda\cong\ZZ$ is a group generated by the transformation
\begin{equation}\label{eq:Hopf}
(x,y)\mapsto (\alpha x+\lambda y^n, \beta y)
\end{equation}
for some coordinates $x$ and $y$ on $\CC^2\supset\WWW$.
Here $n$ is a positive integer, and
$\alpha$ and $\beta$ are complex numbers satisfying
$$
0 < |\alpha|\le |\beta|<1;
$$
moreover, one has $\lambda=0$, or $\alpha=\beta^n$.
A \emph{secondary} Hopf surface is a quotient of a primary Hopf surface by a
free action of a finite group. All Hopf surfaces are non-K\"ahler (and in particular non-projective).
If a Hopf surface has algebraic dimension~$1$, then the image of its algebraic reduction is~$\PP^1$.
We refer the reader to~\mbox{\cite[\S10]{Kodaira-structure-2}} for details.

\begin{remark}\label{remark:Hopf-elliptic}
Let $X$ be a Hopf surface. If $\ad(X)=1$, then $X$ is an elliptic surface.
Conversely, if $\ad(X)=0$, then $X$ contains only a finite number of curves (see~\mbox{\cite[Theorem~IV.8.2]{BHPV-2004}}),
and thus $X$ is not elliptic. In particular, if~\mbox{$X=X(\alpha,\beta,\lambda,n)$} is a primary Hopf surface,
then by~\cite[Theorem~30]{Kodaira-structure-2} its algebraic dimension equals $1$ if and only if $\lambda=0$ and $\alpha^k=\beta^r$ for some
positive integers~$k$ and~$r$.
\end{remark}

Automorphism groups of Hopf surfaces are well studied, see~\cite{MatumotoNakagawa} and references therein.
In particular,~\cite[\S2]{MatumotoNakagawa} provides a complete classification of automorphism groups
of secondary Hopf surfaces.
However, we will not use their explicit descriptions.

\begin{lemma}\label{lemma:primary-Hopf}
Let $X$ be a primary Hopf surface. Then the group $\Aut(X)$ has unbounded finite subgroups.
Furthermore, if $X$ has algebraic dimension $1$, and $\phi\colon X\to\PP^1$ is its algebraic reduction, then
in the notation of~\eqref{eq:elliptic-fibration} both groups
$\Aut(X)_\phi$ and $\Delta$ have unbounded finite subgroups.
Moreover, in this case $\Aut(X)_\phi$ contains the group of points of an elliptic curve.
\end{lemma}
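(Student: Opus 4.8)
The plan is to exhibit, inside each of the groups in question, arbitrarily large finite subgroups realized as roots of unity in suitable one-parameter subgroups, working throughout with the model $X=\WWW/\Lambda$, $\Lambda=\langle g\rangle$, where $g\colon(x,y)\mapsto(\alpha x+\lambda y^n,\beta y)$. For the first assertion I would consider the one-parameter family $h_t\colon(x,y)\mapsto(t^nx,ty)$, $t\in\CC^*$. A direct check shows that $h_t$ is a holomorphic automorphism of $\WWW$ commuting with $g$ (for $\lambda\ne 0$ one uses $\alpha=\beta^n$), so it descends to an element of $\Aut(X)$. For a primitive $N$-th root of unity $\zeta$, the order of $h_\zeta$ in $\Aut(X)$ is exactly $N$: if $h_{\zeta^m}=g^j$ in $\Aut(\WWW)$, then comparing the $y$-coordinates gives $\zeta^m=\beta^j$, which forces $j=0$ since $|\zeta|=1>|\beta|$, whence $\zeta^m=1$. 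Thus $\Aut(X)$ contains cyclic groups of every order and so has unbounded finite subgroups.

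Assume now $\ad(X)=1$. By Remark~\ref{remark:Hopf-elliptic} this means $\lambda=0$ and $\alpha^k=\beta^r$ for some positive integers $k,r$, and I would first arrange $\gcd(k,r)=1$: the relations $\alpha^p\beta^q=1$ form a subgroup of $\ZZ^2$ of rank one (because $\log|\alpha|,\log|\beta|<0$ confine it to a single line), so replacing $(k,-r)$ by a primitive generator makes $k,r$ coprime. The algebraic reduction $\phi\colon X\to\PP^1$ is then the map induced by the $g$-invariant meromorphic function $u=x^k/y^r$ (resolved to the morphism $[x^k:y^r]$). Now introduce the diagonal torus $T=(\CC^*)^2$ acting by $(s,t)\cdot(x,y)=(sx,ty)$; it commutes with $g=\mathrm{diag}(\alpha,\beta)$, hence descends to a subgroup of $\Aut(X)$, and on the base it acts by $u\mapsto(s^k/t^r)\,u$.

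For $\Aut(X)_\phi$ I would restrict to the subtorus $T_0=\{(s,t)\in T: s^k=t^r\}$, which fixes $u$ and therefore acts fiberwise, mapping into $\Aut(X)_\phi$. Coprimality lets me identify $T_0\cong\CC^*$ via $\tau\mapsto(\tau^r,\tau^k)$; under this parametrization $g=(\alpha,\beta)$ corresponds to the point $\tau_0$ with $\tau_0^r=\alpha$, and $\Lambda=\langle g\rangle$ lies inside $T_0$, so the image of $T_0$ in $\Aut(X)_\phi$ is $\CC^*/\langle\tau_0\rangle$. Since $|\tau_0|=|\alpha|^{1/r}<1$, this quotient is the group of points of an elliptic curve $E$, which settles the final ``moreover'' claim; as $E[N]\cong(\ZZ/N)^2$, the group $\Aut(X)_\phi$ has unbounded finite subgroups.

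Finally, for $\Delta$ I would use the whole torus $T$: its image in $\Delta\subseteq\Aut(\PP^1)$ consists of the scalings $u\mapsto(s^k/t^r)\,u$, and as $s^k/t^r$ ranges over all of $\CC^*$ this exhibits $\CC^*\subseteq\Delta$, giving unbounded finite subgroups. The computations are mostly routine; the one step needing genuine care is the passage to $\gcd(k,r)=1$ together with the verification that $T_0$ is connected and that $\CC^*/\langle\tau_0\rangle$ is actually an elliptic curve, i.e. that $|\tau_0|\ne1$ rather than a non-compact or degenerate quotient — this is exactly where the inequalities $0<|\alpha|\le|\beta|<1$ are essential.
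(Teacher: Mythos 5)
Your proof follows essentially the same route as the paper's: diagonal subgroups of $\GL_2(\CC)$ commuting with the deck transformation $(x,y)\mapsto(\alpha x+\lambda y^n,\beta y)$ give unbounded finite subgroups of $\Aut(X)$, the one-parameter subgroup $t\colon(x,y)\mapsto(t^rx,t^ky)$ acts fiberwise with respect to $x^k/y^r$ and descends to the group of points of an elliptic curve inside $\Aut(X)_\phi$, and diagonal scalings that rescale $x^k/y^r$ produce arbitrarily large cyclic subgroups of $\Delta$. The differences are cosmetic (you use the full torus for $\Delta$ where the paper uses the single matrix $\mathrm{diag}(\zeta,1)$, and you verify faithfulness slightly more explicitly), and your reduction to $\gcd(k,r)=1$ is made with essentially the same brevity as in the paper itself.
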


\begin{proof}
Let $X=X(\alpha,\beta,\lambda,n)$.

Choose a positive integer $l$, a primitive $l$-th root of unity $\zeta$, and consider the subgroup~\mbox{$\Theta\subset\GL_2(\CC)$} generated by the matrix
\begin{equation}\label{eq:Hopf-unbounded-1}
A=\left(
\begin{array}{cc}
\zeta^n & 0\\
0 & \zeta
\end{array}
\right).
\end{equation}
Then $\Theta\cong\ZZ/l\ZZ$ acts on the universal cover $\WWW$ of $X$, and commutes with the group
$\Lambda\cong\ZZ$ generated by the transformation~\eqref{eq:Hopf}. Thus, we see that $\Theta$ acts faithfully on
$X=\WWW/\Lambda$, so that~\mbox{$\Aut(X)$} has unbounded finite subgroups.

Now suppose that $\ad(X)=1$. Then $\lambda=0$ and $\alpha^k=\beta^r$ for some
positive integers~$k$ and~$r$ by Remark~\ref{remark:Hopf-elliptic}. 
We can choose $k$ and $r$ so that they are coprime.
The algebraic reduction~\mbox{$\phi\colon X\to\PP^1$} is given by the meromorphic function $x^k/y^r$ on $X$.
In other words, the fibers of $\phi$ are images of the subsets
$$
Z_{[\mu:\nu]}=\{(x,y)\mid \mu x^k=\nu y^r\}\subset \WWW, \quad [\mu:\nu]\in\PP^1.
$$
The subsets $Z_{[\mu:\nu]}$ are obviously invariant with respect to
the action of~$\Lambda$.

Consider the action of the multiplicative group $\CC^*$ on
$\WWW$ defined by
\begin{equation}\label{eq:Hopf-unbounded-2}
t\colon (x,y)\mapsto (t^r x,t^k y).
\end{equation}
This action commutes with the group
$\Lambda$, and descends to an action of the group $\CC^*/\bar{\Lambda}$ on~$X$,
where $\bar{\Lambda}$ is a subgroup of $\CC^*$ generated by all
$r$-th roots of $\alpha$ (or, which is the same, by all
$k$-th roots of $\beta$).
Then $\CC^*/\bar{\Lambda}$ is isomorphic to a quotient~\mbox{$\CC^*/\ZZ$} of the group~$\CC^*$
by a subgroup generated by
some $r$-th root of $\alpha$, which in turn is isomorphic to
the group of points of an elliptic curve. It remains to notice that this action
is fiberwise with respect to $\phi$, so that $\Aut(X)_\phi$ contains
the group of points of an elliptic curve, and in particular has unbounded finite subgroups.

Finally, choose a positive integer $l$ and a primitive $l$-th root of unity $\zeta$.
Consider the cyclic subgroup $\Theta_{\PP^1}\subset\GL_2(\CC)$ generated by the matrix
\begin{equation}
A_{\PP^1}=\left(
\begin{array}{cc}
\zeta & 0\\
0 & 1
\end{array}
\right).\
\end{equation}
Then $\Theta_{\PP^1}$ acts on $\WWW$ and commutes with the group
$\Lambda$. Thus, the action of $\Theta_{\PP^1}$ on $\WWW$ descends 
to its action on $X$.
The image $\bar{\Theta}_{\PP^1}$ of $\Theta_{\PP^1}$ in $\Delta$ 
is generated by the transformation
$$
[\mu:\nu]\mapsto [\zeta^{-k}\mu:\nu].
$$
Therefore, $\bar{\Theta}_{\PP^1}$ has order at least $l/k$, and hence 
the group $\Delta$
has unbounded finite subgroups.
\end{proof}

For the following fact about fundamental groups of certain Hopf surfaces
we refer the reader to~\cite[p.~231]{Kato}.

\begin{lemma}\label{lemma:Kato}
Let $X=X(\alpha,\beta,\lambda,n)$ be a primary Hopf surface, and let $G$
be a finite group acting freely on $X$. Let $Y=X/G$, so that
$Y$ is a secondary Hopf surface. Suppose that $\pi_1(Y)\not\subset\GL_2(\CC)$.
Then $n\ge 2$ and $\lambda\neq 0$. Furthermore, there is an isomorphism~\mbox{$\pi_1(Y)\cong\Lambda\times G$}, one has
$G\cong\ZZ/m\ZZ$, and the action of a generator of $G$ on
$\WWW$ is given by the matrix
$$
\left(
\begin{array}{cc}
\xi^n & 0\\
0 & \xi
\end{array}
\right),\
$$
where $\xi$ is some primitive $m$-th root of unity.
\end{lemma}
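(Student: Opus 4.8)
The plan is to realize $\pi_1(Y)$ as a group of biholomorphisms of $\WWW$ and to study it through its linear parts at the origin, interpreting the inclusion ``$\pi_1(Y)\subset\GL_2(\CC)$'' up to conjugation (i.e.\ linearizability). First I would invoke Hartogs' theorem: every biholomorphism of $\WWW=\CC^2\setminus\{0\}$ extends to a biholomorphism of $\CC^2$ fixing $0$. Since $X\to Y$ is a Galois covering with group $G$, the subgroup $\Lambda=\pi_1(X)$ is normal in $\Gamma:=\pi_1(Y)$ with $\Gamma/\Lambda\cong G$, and $\Gamma$ acts on $\WWW$ with $Y=\WWW/\Gamma$. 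The generator $g_0$ of $\Lambda$ is a contraction: its derivative at $0$ has eigenvalues $\alpha,\beta$ of modulus~$<1$. Because $g_0$ and $g_0^{-1}$ have derivatives with eigenvalues of modulus $<1$ and $>1$ respectively, they are not conjugate in $\Aut(\CC^2,0)$; as $\Lambda\cong\ZZ$ is normal, conjugation by any $h\in\Gamma$ sends $g_0$ to $g_0^{\pm1}$, hence to $g_0$. Thus $\Lambda$ is central in $\Gamma$, and $\Gamma$ lies in the centralizer $Z(g_0)$.

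Next I would compute $Z(g_0)$ by a resonance (Poincar\'e--Dulac) analysis. Comparing Taylor coefficients in $h\circ g_0=g_0\circ h$, the inequalities $0<|\alpha|\le|\beta|<1$ leave only the resonance $\alpha=\beta^n$, contributing a monomial $y^n$ in the first coordinate. One finds that if $g_0$ is linearizable --- which in the given normal form means exactly $\lambda=0$ (the diagonal case, including the resonant diagonal) or $n=1$ (where $(x,y)\mapsto(\beta x+\lambda y,\beta y)$ is already a linear Jordan block) --- then $\Gamma$ can itself be conjugated into $\GL_2(\CC)$: a residual resonant term is removed by a further conjugation $(x,y)\mapsto(x+sy^n,y)$, and that this can be done simultaneously for all of $\Gamma$ is a short cohomological point, using that $\Gamma/\Lambda$ is finite and $\CC$ has characteristic $0$. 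Conversely, if $\lambda\ne0$ then $g_0$ is a genuine normal form that is not linearizable, so $\Gamma\not\subset\GL_2(\CC)$. This establishes the first two assertions: $\Gamma\not\subset\GL_2(\CC)$ forces $\lambda\ne0$, hence $\alpha=\beta^n$ with $n\ge2$, and in this case
\[
Z(g_0)=\bigl\{(x,y)\mapsto(b^n x+cy^n,\,by)\ :\ b\in\CC^*,\ c\in\CC\bigr\}\cong\CC^*\times\CC,
\]
an abelian group; in particular $\Gamma$ is abelian.

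Finally I would extract the group structure from freeness. An element $(x,y)\mapsto(b^nx+cy^n,by)$ has a fixed point on $\WWW$ precisely when $b^n=1$ (the fixed locus being the punctured $x$-axis), so freeness of the $G$-action forces the homomorphism $\Gamma\to\CC^*$, $h\mapsto b_h$, to be injective. Hence $\Gamma$ is a finitely generated subgroup of $\CC^*$ of rank one, so $\Gamma\cong\ZZ\oplus\ZZ/m\ZZ$ with cyclic torsion $T$; moreover, since the $\CC$-factor of $Z(g_0)$ is torsion-free, every torsion element has $c=0$ and is the linear diagonal map $\operatorname{diag}(\xi^n,\xi)$ for a root of unity $\xi$. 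It then remains to identify $\Lambda$ as a direct complement of $T$, i.e.\ to show that $g_0$ is primitive modulo torsion; this yields $\Gamma\cong\Lambda\times T$, $G\cong T\cong\ZZ/m\ZZ$, and the stated action $\operatorname{diag}(\xi^n,\xi)$ of a generator with $\xi$ a primitive $m$-th root of unity.

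I expect this last point --- the primitivity of $g_0$ in $\Gamma$ modulo torsion (equivalently, ruling out an element $\gamma\in\Gamma$ with $\gamma^e\in g_0\cdot T$ for some $e\ge2$) --- to be the main obstacle. The purely algebraic constraints above permit such roots a priori, since the ``smaller'' contraction $\gamma$ still acts freely; so one must use more than the centralizer computation, namely that $X$ is a \emph{primary} Hopf surface (so that $\Lambda$ is the entire fundamental group $\cong\ZZ$) together with the proper discontinuity and cocompactness of the free $\Gamma$-action on $\WWW$. This is precisely the delicate part carried out in Kato's analysis.
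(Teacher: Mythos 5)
The paper offers no proof of this lemma: it is quoted from Kato \cite[p.~231]{Kato}, so there is no internal argument to compare against. Your sketch reconstructs the standard analysis correctly in its main steps: the Hartogs extension of $\Aut(\WWW)$ to $\Aut(\CC^2,0)$, the centrality of $\Lambda$ in $\Gamma=\pi_1(Y)$ via the eigenvalues of the differential at the origin, the identification of the centralizer of the resonant contraction $(x,y)\mapsto(\beta^n x+\lambda y^n,\beta y)$ with $\{(x,y)\mapsto(b^nx+cy^n,by)\}$, and the injectivity of $h\mapsto b_h$ forced by freeness, which gives $\Gamma\cong\ZZ\times\ZZ/m\ZZ$ with diagonal torsion of the stated form.

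The step you defer to Kato, however, is not merely delicate --- it is false as you have posed it, so your plan cannot be completed along those lines. Take $\gamma\colon(x,y)\mapsto(b^nx+y^n,by)$ with $0<|b|<1$ and $n\ge 2$, let $T$ be generated by $\operatorname{diag}(\xi^n,\xi)$ for a primitive $m$-th root of unity $\xi$ with $m$ odd and coprime to $n$, and set $\Gamma=\langle\gamma\rangle\times T$ and $\Lambda=\langle\gamma^2\rangle$. Then $X=\WWW/\Lambda$ is the primary Hopf surface $X(b^{2n},b^2,2b^n,n)$, the group $G=\Gamma/\Lambda\cong\ZZ/2m\ZZ$ acts freely on $X$, and $Y=X/G=\WWW/\Gamma$ has $\pi_1(Y)\cong\ZZ\times\ZZ/m\ZZ$, which is not isomorphic to $\Lambda\times G\cong\ZZ\times\ZZ/2m\ZZ$; in particular $G$ admits no lift to a subgroup of $\Gamma$. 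So no appeal to proper discontinuity, cocompactness, or to $X$ being primary will force the given $g_0$ to be primitive modulo torsion. The statement must be read existentially, which is how it is used in Lemma~\ref{lemma:secondary-Hopf}: $\pi_1(Y)=\langle\gamma\rangle\times T$ where $\gamma$ generates the free part (so that $\WWW/\langle\gamma\rangle$ is again a primary Hopf surface of the same shape, possibly different from the given $X$) and $T\cong\ZZ/m\ZZ$ acts by the stated matrices. Your own argument proves exactly this once you replace $\Lambda$ by the free factor $\langle\gamma\rangle$; make that substitution explicit instead of chasing the primitivity of $g_0$. A lesser soft spot is the ``short cohomological point'' for simultaneously linearizing $\Gamma$ when $\lambda=0$ or $n=1$: it does work (commutativity gives $c_h(a_{h_0}-b_{h_0}^n)=c_{h_0}(a_h-b_h^n)$ for all $h$, which produces the single conjugating shear $(x,y)\mapsto(x+sy^n,y)$ whenever some $h_0$ has $a_{h_0}\neq b_{h_0}^n$, and the remaining case is handled by noting that $\lambda=c_{g_0}=0$ forces the free generator to be diagonal), but as written it is an assertion rather than a proof, and it is needed for the first conclusion of the lemma.
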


While elliptic primary Hopf surfaces do not have degenerate fibers, elliptic
secondary Hopf surfaces may have multiple fibers. Those with at most two multiple fibers allow
an explicit description.

\begin{lemma}\label{lemma:Hopf-two-mult-fibers}
Let $Y$ be a secondary Hopf surface of algebraic dimension $1$,
and let~\mbox{$\phi\colon Y\to\PP^1$} be its algebraic reduction.
Suppose that $\phi$ has at most two multiple fibers.
Then~\mbox{$Y\cong\WWW/\Gamma$}, where~$\Gamma$ is generated by
the transformation
\begin{equation*}
(x,y)\mapsto (\alpha x, \beta y),
\end{equation*}
and another transformation of the form
\begin{equation*}
(x,y)\mapsto (\alpha' x, \beta' y)
\end{equation*}
for certain complex numbers $\alpha'$ and $\beta'$.
\end{lemma}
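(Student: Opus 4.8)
The plan is to realize $Y$ explicitly as a quotient $\mathscr{W}/\Gamma$ and pin down the structure of $\Gamma$ using the previous lemmas. Since $Y$ is a secondary Hopf surface, there is a finite group $G$ acting freely on a primary Hopf surface $X=X(\alpha,\beta,\lambda,n)$ with $Y\cong X/G$, so $\Gamma=\pi_1(Y)$ fits into an extension of $G$ by $\Lambda=\pi_1(X)\cong\mathbb{Z}$. The key structural dichotomy is whether $\Gamma\subset\mathrm{GL}_2(\mathbb{C})$ or not. I would first argue that the ``exotic'' case of Lemma~\ref{lemma:Kato}, where $\Gamma\not\subset\mathrm{GL}_2(\mathbb{C})$, is incompatible with having at most two multiple fibers, so that under our hypothesis $\Gamma$ is in fact a subgroup of $\mathrm{GL}_2(\mathbb{C})$.

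The heart of the argument is the \emph{count of multiple fibers in terms of $\Gamma$}. Since $\mathrm{ad}(Y)=1$, by Remark~\ref{remark:Hopf-elliptic} the primary cover has $\lambda=0$, and the algebraic reduction $\phi$ is induced by the meromorphic function $x^k/y^r$ exactly as in the proof of Lemma~\ref{lemma:primary-Hopf}, with fibers the images of the curves $Z_{[\mu:\nu]}=\{\mu x^k=\nu y^r\}$. The finite group $G$ acts on $\mathbb{P}^1$ through $\Delta$, and the multiple fibers of $\phi$ arise precisely at the branch points of the quotient $\mathbb{P}^1\to\mathbb{P}^1/\bar G$, where $\bar G$ is the image of $G$ in $\mathrm{PGL}_2(\mathbb{C})$. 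A finite cyclic action on $\mathbb{P}^1$ fixes exactly two points, so it contributes exactly two branch points (hence at most two multiple fibers), whereas any non-cyclic finite action on $\mathbb{P}^1$ has at least three branch points. The condition ``at most two multiple fibers'' therefore forces $\bar G$ to be cyclic and, after diagonalizing so that its fixed points are $[1:0]$ and $[0:1]$, forces $G$ itself to act diagonally on the coordinates $x,y$.

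Granting this, I would conclude as follows. The subgroup $\Lambda$ is generated by the diagonal transformation $(x,y)\mapsto(\alpha x,\beta y)$ (using $\lambda=0$). A lift to $\Gamma$ of a generator of the cyclic group $\bar G$ is, after the diagonalization above, again a diagonal transformation $(x,y)\mapsto(\alpha' x,\beta' y)$ for suitable $\alpha',\beta'\in\mathbb{C}^*$. Since $\Gamma$ is generated by $\Lambda$ together with any lift of a generator of $G$, the group $\Gamma$ is generated by these two diagonal maps, which is exactly the claimed presentation.

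\textbf{The main obstacle} I anticipate is the rigorous identification of multiple fibers of $\phi$ with branch points of $\mathbb{P}^1\to\mathbb{P}^1/\bar G$: one must check that the primary fibration $X\to\mathbb{P}^1$ has \emph{no} multiple fibers (consistent with the remark that elliptic primary Hopf surfaces have no degenerate fibers), so that multiplicities are created solely by the ramification of $G$ on the base, and that the local multiplicity at a branch point equals the ramification index. Once this correspondence is established, the reduction to a diagonal cyclic $G$ via the two-branch-point bound is the decisive step, and the diagonalization of the generator finishes the proof.
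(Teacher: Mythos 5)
The paper itself does not prove this lemma; it only points to the proofs of \cite[Lemma~8]{Kodaira-structure-2} and \cite[Proposition~7.3]{FujimotoNakayama}. Your self-contained argument via the branch points of $\PP^1\to\PP^1/\bar G$ is therefore a genuinely different (and reasonable) route, but as written it has two real gaps.

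First, the auxiliary claim you single out as ``the main obstacle'' --- that the primary fibration $X\to\PP^1$ has no multiple fibers --- is false in general. With $\lambda=0$ and $\alpha^k=\beta^r$ ($k,r$ coprime), the fibration is induced by $x^k/y^r$, so the scheme-theoretic fibers over $0$ and $\infty$ are $k\cdot\{x=0\}$ and $r\cdot\{y=0\}$: these are multiple fibers of multiplicities $k$ and $r$ whenever those exceed $1$. The paper's remark that primary elliptic Hopf surfaces have no \emph{degenerate} fibers refers to the reduced fibers all being smooth elliptic curves; it does not exclude fibers of type ${}_mI_0$. Your argument survives because you only need the one-sided implication ``branch point of $\PP^1\to\PP^1/\bar G$ $\Rightarrow$ multiple fiber of $\phi$'' (extra multiple fibers coming from $0$ and $\infty$ only reinforce the count), so ``at most two multiple fibers $\Rightarrow$ $\bar G$ cyclic'' still holds; but you must not use the claimed equality of the two sets of points.

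Second, the passage from ``$\bar G$ is cyclic'' to ``$G$ acts diagonally in the coordinates $x,y$'' is glossed over. When $\alpha\neq\beta$ you are not free to ``diagonalize so that the fixed points are $[1:0]$ and $[0:1]$'': the coordinates are already pinned by $\Lambda$, and the map to the base is $[x^k:y^r]$, not the linear projectivization, so a M\"obius change of the base coordinate does not correspond to a linear change on $\WWW$. The correct fix is Lemma~\ref{lemma:linear-algebra}: since $\Gamma=\pi_1(Y)\subset\GL_2(\CC)$ normalizes $\langle M\rangle$ with $M=\mathrm{diag}(\alpha,\beta)$, in the case $\alpha\neq\beta$ the whole group $\Gamma$ is automatically diagonal and the multiple-fiber hypothesis is not even needed. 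Only in the case $\alpha=\beta$ (where the base map is the honest $[x:y]$ and arbitrary linear coordinate changes preserve $\Lambda=\langle\alpha I\rangle$) does your branch-point count do the work: there $\bar G$ cyclic lets you diagonalize a lift of its generator, and the kernel of $\Gamma\to\PGL_2(\CC)$ consists of scalars, so $\Gamma$ becomes diagonal. Finally, ``$\Gamma$ is generated by $\Lambda$ and one lift of a generator of $G$'' presumes $G$ cyclic, which need not hold; what you actually obtain is that $\Gamma$ is a diagonal group isomorphic to $\ZZ\times\ZZ/m\ZZ$ (the torsion is cyclic because the action on $\WWW$ is free), hence generated by two diagonal transformations after possibly re-choosing the infinite-order generator --- which is the form of the conclusion actually used later in the paper.
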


\begin{proof}
See the proof of~\cite[Lemma~8]{Kodaira-structure-2} or the proof of~\cite[Proposition~7.3]{FujimotoNakayama}.
\end{proof}

We will need one elementary observation from linear algebra.

\begin{lemma}\label{lemma:linear-algebra}
Let $\alpha$ and $\beta$ be complex numbers with
$0<|\alpha|,|\beta|<1$, and let
\begin{equation*}
M=\left(
\begin{array}{cc}
\alpha & 0\\
0 & \beta
\end{array}
\right).
\end{equation*}
Then the normalizer $\Norm_{\GL_2(\CC)}(M)$ of the cyclic group
generated by the matrix $M$ in~\mbox{$\GL_2(\CC)$}
coincides with its centralizer.
Furthermore, $\Norm_{\GL_2(\CC)}(M)$ consists of all diagonal matrices in~\mbox{$\GL_2(\CC)$} if $\alpha\neq\beta$, and
$\Norm_{\GL_2(\CC)}(M)=\GL_2(\CC)$ if $\alpha=\beta$.
\end{lemma}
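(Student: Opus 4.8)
The plan is to prove Lemma~\ref{lemma:linear-algebra} by direct computation, since the matrix $M$ is diagonal with entries satisfying $0<|\alpha|,|\beta|<1$. The key point to exploit is that $\alpha$ and $\beta$ are \emph{not} roots of unity (their absolute values are strictly less than~$1$), so the cyclic group $\langle M\rangle$ generated by $M$ is infinite. This means any element of the normalizer must conjugate the \emph{entire} infinite cyclic group to itself, and I would argue that in fact it must fix $M$ itself up to the action on eigenvalues.

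First I would treat the case $\alpha=\beta$, which is immediate: here $M=\alpha\cdot\mathrm{Id}$ is scalar, so $\langle M\rangle$ lies in the center of $\GL_2(\CC)$, and both its normalizer and centralizer are all of $\GL_2(\CC)$. For the main case $\alpha\neq\beta$, the crucial observation is that the eigenvalues of $M$ are determined by $M$ and are preserved under conjugation. Suppose $N\in\Norm_{\GL_2(\CC)}(M)$, so $NMN^{-1}=M^j$ for some integer~$j$. Comparing eigenvalues, the multiset $\{\alpha,\beta\}$ must equal $\{\alpha^j,\beta^j\}$. I would then show that because $|\alpha|,|\beta|<1$, taking absolute values forces $|\alpha|=|\alpha^j|=|\alpha|^j$ (or the swapped version), and since $0<|\alpha|<1$ this is only possible when $j=1$; the swapped possibility $\{\alpha,\beta\}=\{\beta^j,\alpha^j\}$ is ruled out similarly, as it would force $|\alpha|=|\beta|^j$ and $|\beta|=|\alpha|^j$, whence $|\alpha|=|\alpha|^{j^2}$ and again $j^2=1$, but $j=-1$ contradicts $|\alpha|,|\beta|<1$ while $j=1$ forces $\alpha=\beta$. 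Hence $j=1$, so $N$ centralizes $M$, proving $\Norm_{\GL_2(\CC)}(M)=\mathrm{C}_{\GL_2(\CC)}(M)$.

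It then remains to compute the centralizer explicitly in the case $\alpha\neq\beta$. Writing $N=(n_{pq})$ and expanding $NM=MN$ entrywise gives $n_{12}\beta=\alpha\, n_{12}$ and $n_{21}\alpha=\beta\, n_{21}$, so the off-diagonal entries vanish whenever $\alpha\neq\beta$. Thus $N$ is diagonal, and conversely every invertible diagonal matrix commutes with $M$; this identifies $\Norm_{\GL_2(\CC)}(M)$ with the group of all invertible diagonal matrices, as claimed.

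I do not anticipate a serious obstacle here, as the statement is elementary; the only point requiring a little care is the eigenvalue bookkeeping that rules out a nontrivial power $j\neq 1$ in the normalizer, and this is exactly where the hypothesis $0<|\alpha|,|\beta|<1$ (equivalently, that $\alpha$ and $\beta$ are not roots of unity) is used. Everything else is a routine commutation calculation.
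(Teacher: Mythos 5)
Your proof is correct, and its overall shape matches the paper's: the case $\alpha=\beta$ and the explicit computation of the centralizer as the diagonal torus are treated as routine, and the substance is showing that for $\alpha\neq\beta$ the normalizer equals the centralizer. The one step where you genuinely diverge is how you rule out a normalizing element that does not centralize. The paper observes that since $\langle M\rangle$ is infinite cyclic, such an element would have to satisfy $RMR^{-1}=M^{-1}$ (the only nontrivial automorphism of $\ZZ$ is inversion), and kills this in one line by comparing determinants: $|\det M|=|\alpha\beta|<1$ while $|\det M^{-1}|>1$. You instead allow an arbitrary power $RMR^{-1}=M^{j}$ and compare the multisets of eigenvalue moduli, using $0<|\alpha|,|\beta|<1$ to force $j=1$. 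Your route is slightly longer but avoids invoking $\Aut(\ZZ)=\{\pm 1\}$, and the multiset bookkeeping (including the swapped case leading to $j^2=1$) is carried out correctly; the determinant argument is the more economical closing move. One cosmetic quibble: your parenthetical claiming that $0<|\alpha|,|\beta|<1$ is ``equivalent'' to $\alpha,\beta$ not being roots of unity is not literally an equivalence (e.g.\ $\alpha=2$), but your actual argument uses the modulus hypothesis directly, so nothing is affected.
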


\begin{proof}
The only thing that has to be checked is that in the case $\alpha\neq \beta$ the centralizer of $M$
coincides with $\Norm_{\GL_2(\CC)}(M)$. To see this suppose that there is some matrix~\mbox{$R\in\Norm_{\GL_2(\CC)}(M)$}
that does not commute with $M$. Then~\mbox{$RMR^{-1}=M^{-1}$}, which is impossible since
$$
\det(M)=\alpha\beta\neq\alpha^{-1}\beta^{-1}=\det(M^{-1}).
$$
\end{proof}

Now we prove unboundedness results for secondary Hopf surfaces.

\begin{lemma}\label{lemma:secondary-Hopf}
Let $Y$ be a secondary Hopf surface. Then the group $\Aut(Y)$ has unbounded finite subgroups.
Suppose that $Y$ has algebraic dimension $1$, and $\phi\colon Y\to\PP^1$ is its algebraic reduction.
Then in the notation of~\eqref{eq:elliptic-fibration} the group
$\Aut(Y)_\phi$
contains the group of points of an elliptic curve, and in particular has unbounded finite subgroups.
Furthermore, the group~$\Delta$ is finite if and only if $\phi$ has at least three multiple fibers; otherwise $\Delta$ has unbounded finite subgroups.
\end{lemma}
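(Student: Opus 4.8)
The plan is to work on the universal cover, writing $Y=\WWW/\Gamma$, where $X=X(\alpha,\beta,\lambda,n)=\WWW/\Lambda$ is the primary Hopf surface covering $Y$, the group $\Lambda\cong\ZZ$ is normal in $\Gamma=\pi_1(Y)$ with finite quotient $G=\Gamma/\Lambda$, and $Y=X/G$; automorphisms of $Y$ are then induced by automorphisms of $\WWW$ normalizing $\Gamma$, and I would exhibit explicit such subgroups as in the proof of Lemma~\ref{lemma:primary-Hopf}. To show $\Aut(Y)$ has unbounded finite subgroups I split into two cases according to whether $\Gamma\subset\GL_2(\CC)$. If $\Gamma\not\subset\GL_2(\CC)$, then Lemma~\ref{lemma:Kato} applies and gives $\Gamma\cong\Lambda\times G$ with $G$ generated by $\mathrm{diag}(\xi^n,\xi)$; for a primitive $l$-th root of unity $\zeta$ the cyclic group $\Theta$ of order $l$ generated by $\mathrm{diag}(\zeta^n,\zeta)$ centralizes $\Gamma$ (it commutes with the generator of $\Lambda$ by the computation of Lemma~\ref{lemma:primary-Hopf}, and with $G$ because both are diagonal), and since $\Theta\cap\Gamma$ has order at most $|G|$, the image of $\Theta$ in $\Aut(Y)$ grows with $l$. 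If $\Gamma\subset\GL_2(\CC)$, then the scalar matrices $\{cI\}\cong\CC^*$ centralize $\Gamma$ and descend to the subgroup $\CC^*/(\CC^*\cap\Gamma)$ of $\Aut(Y)$; since $\CC^*\cap\Gamma$ is discrete, this is a positive-dimensional complex Lie group and hence has unbounded finite subgroups.

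For the assertion on $\Aut(Y)_\phi$, I would first note that $\ad(Y)=1$ forces $\ad(X)=1$ (as $X\to Y$ is finite \'etale), hence $\lambda=0$ and $\alpha^k=\beta^r$ with $k,r$ coprime, and in particular $\Gamma\subset\GL_2(\CC)$ by Lemma~\ref{lemma:Kato}. I then reuse the action $t\colon(x,y)\mapsto(t^rx,t^ky)$ of Lemma~\ref{lemma:primary-Hopf}: it fixes the function $u=x^k/y^r$ cutting out the fibration, and it commutes with $\Gamma$ (which is diagonal when $\alpha\neq\beta$ by Lemma~\ref{lemma:linear-algebra}, while $t$ is scalar when $\alpha=\beta$). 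Hence it descends to a fiberwise action on $Y$, and exactly as in the proof of Lemma~\ref{lemma:primary-Hopf} its image is the group of points of an elliptic curve contained in $\Aut(Y)_\phi$.

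The core of the statement is the dichotomy for $\Delta$, which I would establish directly from the number of multiple fibers. If $\phi$ has at least three multiple fibers, then, since $\Delta$ preserves the fibration and the multiplicities of its fibers, it permutes the resulting finite set $S\subset\PP^1$ of at least three points; as a subgroup of $\Aut(\PP^1)=\PGL_2(\CC)$ stabilizing a set of at least three points is finite, $\Delta$ is finite. If $\phi$ has at most two multiple fibers, Lemma~\ref{lemma:Hopf-two-mult-fibers} presents $Y=\WWW/\Gamma$ with $\Gamma$ generated by two diagonal transformations; then $u=x^k/y^r$ is $\Lambda$-invariant, some power $v=u^N$ is $\Gamma$-invariant and defines $\phi$, and the diagonal matrices $D_s=\mathrm{diag}(s,1)$ commute with $\Gamma$, descend to $\Aut(Y)$, and act on the base by $v\mapsto s^{kN}v$. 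This yields a copy of $\CC^*$ inside $\Delta$, so $\Delta$ has unbounded finite subgroups.

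I expect this $\Delta$-dichotomy to be the main obstacle. On the finiteness side one must argue carefully that every element of $\Delta$ genuinely permutes the multiple fibers, so that three of them pin down a finite subgroup of $\PGL_2(\CC)$; on the unboundedness side one must verify that the scalings $D_s$ descend to the base $\PP^1$ of the algebraic reduction and act there with infinite image, rather than being absorbed into $\Aut(Y)_\phi$. The explicit diagonal model supplied by Lemma~\ref{lemma:Hopf-two-mult-fibers} is what makes the latter computation transparent, while Lemma~\ref{lemma:linear-algebra} is what forces $\Gamma$ to be diagonal when $\alpha\neq\beta$ and thereby lets the fiberwise and base actions descend.
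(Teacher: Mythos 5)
Your proposal is correct and follows essentially the same route as the paper: the same case split on whether $\pi_1(Y)\subset\GL_2(\CC)$ using Lemma~\ref{lemma:Kato}, the same $\CC^*$-action $t\colon(x,y)\mapsto(t^rx,t^ky)$ commuting with $\pi_1(Y)$ via Lemma~\ref{lemma:linear-algebra} for the fiberwise part, and the same dichotomy for $\Delta$ via the stabilizer of the multiple fibers versus the diagonal model of Lemma~\ref{lemma:Hopf-two-mult-fibers}. The only differences are cosmetic (bounding the kernel by $|G|$ instead of taking $l$ coprime to $m$, and using $\mathrm{diag}(s,1)$ rather than scalar matrices in the last step), and they do not affect the argument.
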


\begin{proof}
There exists a primary Hopf surface $X=X(\alpha,\beta,\lambda,n)$ and a finite group $G$
acting freely on $X$ such that $Y=X/G$. Note that $\ad(X)=\ad(Y)$, so that $X$ is an elliptic surface
if and only if $Y$ is, cf. Remark~\ref{remark:Hopf-elliptic}. As before, $\WWW$ is the universal cover
of $X$ and $Y$, so that $X=\WWW/\Lambda$, where the generator of the group $\Lambda\cong\ZZ$
acts on $\WWW$ by the transformation~\eqref{eq:Hopf}. In what follows we will use the notation
of the proof of Lemma~\ref{lemma:primary-Hopf}. Choose a positive integer~$l$.

Suppose that $\pi_1(Y)\not\subset\GL_2(\CC)$.
Then the group $\pi_1(Y)$ is described by Lemma~\ref{lemma:Kato}.
We see that  the action of the group $\Theta\cong\ZZ/l\ZZ$ generated by the matrix
$A$ from~\eqref{eq:Hopf-unbounded-1}
commutes with the action of $\pi_1(Y)$ on~$\WWW$. 
Furthermore, if $l$ is coprime to $m$, then
the action of $\Theta$ on~$\WWW$ descends to its faithful action on $Y$. Thus, the group
$\Aut(Y)$ has unbounded finite subgroups in this case.

Suppose that $\pi_1(Y)\subset\GL_2(\CC)$. Choose a primitive $l$-th root of unity $\zeta$,
and consider the subgroup $\Theta\subset\GL_2(\CC)$ generated by the matrix
\begin{equation}\label{eq:scalar}
\left(
\begin{array}{cc}
\zeta & 0\\
0 & \zeta
\end{array}
\right).
\end{equation}
Then $\Theta\cong\ZZ/l\ZZ$ acts on $\WWW$ and commutes with the group
$\pi_1(Y)$. Thus, we see that $\Theta$ acts on
$Y=\WWW/\pi_1(Y)$. This action may be not faithful, but the order of its kernel
is at most~$|G|$. So the group~\mbox{$\Aut(Y)$} has unbounded finite subgroups in this case as well.

Now suppose that $\ad(Y)=1$. Then $\ad(X)=1$, so that $\lambda=0$ and $\alpha^k=\beta^r$ for some
coprime positive integers $k$ and $r$. In particular, we have $\pi_1(Y)\subset\GL_2(\CC)$ by Lemma~\ref{lemma:Kato}.
Consider the matrix
\begin{equation*}
M=\left(
\begin{array}{cc}
\alpha & 0\\
0 & \beta
\end{array}
\right).
\end{equation*}
Note that $\pi_1(Y)$ is contained in the normalizer $\Norm_{\GL_2(\CC)}(M)$. Recall from
Lemma~\ref{lemma:linear-algebra} that the group~\mbox{$\Norm_{\GL_2(\CC)}(M)$} consists of diagonal matrices
if $\alpha\neq \beta$, and coincides with~\mbox{$\GL_2(\CC)$} otherwise. In both cases we see that
the action~\eqref{eq:Hopf-unbounded-2} of the group $\CC^*$ on~$\WWW$
commutes with~$\pi_1(Y)$. As in the proof of Lemma~\ref{lemma:primary-Hopf},
such an action is fiberwise with respect to~$\phi$.
This implies that $\Aut(Y)_\phi$
contains the group of points of some elliptic curve, and thus
has unbounded finite subgroups.

Finally, observe that if $\phi$ has at least three multiple fibers, then the group $\Delta$
is obviously finite. On the other hand, if $\phi$ has at most two multiple fibers, then $Y$
can be constructed as in Lemma~\ref{lemma:Hopf-two-mult-fibers}.
Thus one can easily choose a finite subgroup of the group of scalar matrices acting on $\WWW$
whose image in $\Delta$ has arbitrarily large order.
\end{proof}

\begin{remark}
In the notation of Lemma~\ref{lemma:secondary-Hopf}, suppose that $a(Y)=1$.
Then the scalar matrix~\eqref{eq:scalar} also commutes with $\pi_1(Y)$,
and its image in $\Aut(Y)$ is not contained in the subgroup $\Aut(Y)_\phi$ unless $\alpha=\beta$.
So, if the latter condition does not hold, we conclude that the group $\Delta$ has unbounded
finite subgroups (and in particular $\phi$ has less than three multiple fibers).
\end{remark}

\section{Kodaira surfaces}
\label{section:Kodaira}

In this section we study automorphism groups of Kodaira surfaces.

Recall (see e.g. \cite[\S\,V.5]{BHPV-2004}) that a Kodaira surface
is a compact complex surface of Kodaira dimension $0$ with odd first Betti number.
There are two types of Kodaira surfaces: primary and secondary ones.
A primary Kodaira surface
is a minimal compact complex surface with
the trivial canonical class and first Betti number equal to~$3$, see~\cite[\S6]{Kodaira-structure-1}.
A secondary Kodaira surface is a quotient of a primary Kodaira surface by a
free action of a finite cyclic group, see~\cite[\S\,V.5]{BHPV-2004}.
The algebraic dimension of primary and secondary Kodaira surfaces
is always equal to~$1$. The image of the algebraic reduction of a primary Kodaira surface is an elliptic curve,
while for a secondary Kodaira surface
it is~$\PP^1$, see~\cite[\S\,VI.1]{BHPV-2004}.

Let $X$ be a primary Kodaira surface.
The universal cover of $X$ is isomorphic to $\CC^2$, and the fundamental group $\Gamma=\pi_1(X)$
has the following presentation:
\begin{equation*}
\Gamma=\langle \updelta_1,\updelta_2,\updelta_3,\upgamma
\mid \updelta_1\updelta_2\updelta_1^{-1}\updelta_2^{-1}= \updelta_3^r,\
\updelta_i\updelta_3\updelta_i^{-1}\updelta_3^{-1}=\updelta_i\upgamma\updelta_i^{-1}\upgamma^{-1}=1 \text{\ for\ } i=1,2,3
\rangle,
\end{equation*}
where $r$ is a positive integer, see~\cite[\S6]{Kodaira-structure-1}.

According to~\cite[pp.~787--788]{Kodaira-structure-1}, the surface~$X$ can be obtained as a quotient
of $\CC^2$ by the group $\Gamma$ whose action is defined by
\begin{equation}\label{eq:Kodaira-quotient}
\begin{aligned}
& \updelta_1\colon (z,w)\mapsto (z+a_1, w+\bar{a}_1z+b_1),\\
& \updelta_2\colon (z,w)\mapsto (z+a_2, w+\bar{a}_2z+b_2),\\
& \updelta_3\colon (z,w)\mapsto \left(z, w+b_3\right),\\
& \upgamma\colon (z,w)\mapsto (z,w+b_4),\\
\end{aligned}
\end{equation}
where $\bar{a}_1a_2-\bar{a}_2a_1=rb_3\neq 0$,
and the complex numbers $b_3$ and $b_4$ are
linearly independent over~$\RR$.
The algebraic reduction $\phi\colon X\to C$ corresponds to the projection
of $\CC^2$ on the $z$-coordinate.

We start with a straightforward observation concerning automorphisms of primary Kodaira surfaces.

\begin{lemma}[{cf.~\cite[Remark~2]{Borcea}}]
\label{lemma:primary-Kodaira}
Let $X$ be a primary Kodaira surface, and let~\mbox{$\phi\colon X\to C$} be its algebraic reduction. Then
in the notation of~\eqref{eq:elliptic-fibration} the group
$\Aut(X)_\phi$ contains the group of points of an elliptic curve. In particular,
$\Aut(X)_\phi$, and thus also~$\Aut(X)$,
has unbounded finite subgroups.
\end{lemma}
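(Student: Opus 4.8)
The plan is to imitate the argument of Lemma~\ref{lemma:primary-Hopf}: I will exhibit a one-parameter group of transformations of the universal cover $\CC^2$ that commutes with $\Gamma=\pi_1(X)$ and preserves the fibers of $\phi$, and then descend it to a faithful fiberwise action on $X$ by the group of points of an elliptic curve.

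Since $\phi$ is the projection $(z,w)\mapsto z$, a transformation of $\CC^2$ acts along the fibers of $\phi$ precisely when it preserves the $z$-coordinate. I will therefore consider the translations
\begin{equation*}
T_c\colon (z,w)\mapsto (z,w+c),\qquad c\in\CC,
\end{equation*}
which form a copy of the additive group $\CC$. A direct check against the generators~\eqref{eq:Kodaira-quotient} shows that each $T_c$ commutes with $\updelta_1,\updelta_2,\updelta_3$ and $\upgamma$: every one of these four maps alters $w$ only by adding a quantity independent of $w$, and translates $z$ by a quantity independent of $c$, so composing with $T_c$ in either order yields the same affine map. Hence the action of $\CC$ by the $T_c$ commutes with $\Gamma$ and descends to an action on $X=\CC^2/\Gamma$ that is fiberwise with respect to $\phi$.

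Next I would identify the kernel $K=\{c\in\CC\mid T_c\in\Gamma\}$ of the induced action. Among the generators one has $\updelta_3=T_{b_3}$ and $\upgamma=T_{b_4}$, so $K$ contains $\ZZ b_3+\ZZ b_4$, which is a full lattice in $\CC$ because $b_3$ and $b_4$ are linearly independent over $\RR$. On the other hand, $\Gamma$ acts properly discontinuously and is countable, so $K$ is discrete; a discrete subgroup of $\CC$ containing a full lattice is itself a full lattice. (Alternatively, one checks that an element of $\Gamma$ fixes the $z$-coordinate only if it lies in $\langle\updelta_3,\upgamma\rangle$, using that $a_1,a_2$ are $\RR$-linearly independent --- this is exactly the statement that the base $C=\CC/(\ZZ a_1+\ZZ a_2)$ of the algebraic reduction is an elliptic curve --- which gives $K=\ZZ b_3+\ZZ b_4$ on the nose.) It follows that $E=\CC/K$ is a one-dimensional complex torus, that is, the group of points of an elliptic curve, and that the induced action of $E$ on $X$ is faithful.

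Consequently $\Aut(X)_\phi$ contains $E$, which is the main assertion. The concluding remark is then immediate, since $E$ contains the $n$-torsion subgroup $E[n]\cong(\ZZ/n\ZZ)^2$ for every positive integer $n$, so $\Aut(X)_\phi$, and a fortiori $\Aut(X)$, has unbounded finite subgroups. I expect no essential difficulty in this argument; the only step requiring mild care is verifying that $K$ is a full lattice rather than a larger (e.g.\ dense) subgroup, and this follows from the proper discontinuity of the $\Gamma$-action, equivalently from the $\RR$-independence of $a_1$ and $a_2$.
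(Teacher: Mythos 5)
Your argument is correct and is essentially the paper's own proof: the paper also uses the translations $(z,w)\mapsto(z,w+t)$, notes they commute with $\Gamma$ and descend to a faithful fiberwise action of the elliptic curve $\CC/\Lambda$ with $\Lambda=\ZZ b_3+\ZZ b_4$. Your extra care in verifying that the kernel is exactly this lattice (rather than something larger) is a reasonable elaboration of a step the paper leaves implicit, but it does not change the approach.
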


\begin{proof}
Let $\Gamma=\pi_1(X)$.
The action of $\Gamma$ on the universal cover $\CC^2$ of $X$ is given
by formulas~\eqref{eq:Kodaira-quotient}.

Consider the action of the additive group $\CC$ on $\CC^2$ given by
\begin{equation}\label{eq:additive}
t\colon (z,w)\mapsto (z,w+t).
\end{equation}
This action commutes with $\Gamma$, and thus descends to
a faithful action of the group of points of the elliptic curve $\CC/\Lambda$ on
$X$, where $\Lambda$ is a lattice generated by $b_3$ and $b_4$.
It remains to notice that the latter action is fiberwise with
respect to~$\phi$.
\end{proof}

Any automorphism
$g$ of a primary Kodaira surface $X$ gives an automorphism
$\bar{g}$ of the base of the algebraic reduction~\mbox{$\phi\colon X\to C$}.
Since $C$ is an elliptic curve, one has~\mbox{$H^0(C, T_C)\cong\CC$}, and
$\bar{g}_*$ acts on $H^0(C, T_C)$ as a multiplication by some
complex number~$\alpha(g)$ with~\mbox{$|\alpha(g)|=1$}.
The following is a particular case of \cite[Proposition~6.4]{FujimotoNakayama}
(see also \cite[Lemma~6.1]{FujimotoNakayama}).

\begin{lemma}\label{lemma:Aut-Kodaira-explicit}
Let $X$ be a primary Kodaira surface, and let
$C$ be the base of its algebraic reduction. Let $g$ be an automorphism of
$X$ such that the corresponding automorphism
of~\mbox{$H^0(C, T_C)$} is the multiplication
by $\alpha\in\CC^*$. Then $g$ is induced
from the automorphism of the universal cover $\CC^2$ of $X$
of the form
\begin{equation}\label{eq:Kodaira-explicit}
(z,w)\mapsto (\alpha z+\beta, w+F(z)),
\end{equation}
where $\beta$ is some complex number such that $\beta=0$
if $\alpha=1$, and $F(z)$ is some holomorphic
function.
\end{lemma}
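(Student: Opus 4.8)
The plan is to transport everything to the universal cover $\CC^2$ and exploit that the algebraic reduction $\phi$ is canonical, hence preserved by every automorphism. First I would lift $g$ to a biholomorphism $\tilde g\colon\CC^2\to\CC^2$, which exists since $\CC^2$ is simply connected and which normalizes the deck group $\Gamma=\pi_1(X)$ acting by \eqref{eq:Kodaira-quotient}. Write $\Lambda_z=\ZZ a_1+\ZZ a_2$ for the image of $\Gamma$ under the $z$-translation part and $\Lambda_w=\ZZ b_3+\ZZ b_4$ for the central subgroup of pure $w$-translations, so that $C=\CC/\Lambda_z$ and the fibers of $\phi$ are copies of $\CC/\Lambda_w$. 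Because $g$ respects $\phi$, the lift $\tilde g$ sends fibers of the projection $(z,w)\mapsto z$ to fibers of the same projection; hence it is triangular, $\tilde g(z,w)=(h(z),k(z,w))$, with $h$ a lift of $\bar g\in\Aut(C)$.

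Next I would determine the two components. The map $h$ descends to $\bar g$ on $C=\CC/\Lambda_z$ and is a biholomorphism of $\CC$, hence affine, $h(z)=\alpha z+\beta$ with $\alpha\Lambda_z=\Lambda_z$; thus $\alpha$ is a root of unity with $|\alpha|=1$, and it is precisely the multiplier of $\bar g$ on $H^0(C,T_C)$. For each fixed $z$ the map $w\mapsto k(z,w)$ is a biholomorphism of $\CC$ normalizing the lattice $\Lambda_w$, so it is affine, $k(z,w)=c(z)w+F(z)$, with $c(z)$ a unit of $\Lambda_w$. As $c(\cdot)$ is holomorphic on the connected set $\CC$ and takes values in the finite group of such units, it is a constant $c$. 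So $\tilde g(z,w)=(\alpha z+\beta,\,cw+F(z))$ for a holomorphic function $F$.

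The key step is to prove $c=1$. For this I would impose that $\tilde g$ normalizes $\Gamma$ by conjugating the two generators $\updelta_1,\updelta_2$ from \eqref{eq:Kodaira-quotient}. A direct computation gives that the $w$-component of $\tilde g\,\updelta_i\,\tilde g^{-1}$ has $z$-linear part $c\bar a_i\alpha^{-1}z$ up to the contribution of $F$, whereas any element of $\Gamma$ with $z$-shift $\alpha a_i$ has $w$-linear coefficient $\overline{\alpha a_i}=\bar\alpha\bar a_i$. Subtracting the relations for $i=1$ and $i=2$ cancels the contribution of $F$ and leaves $c\alpha^{-1}(\bar a_1-\bar a_2)=\bar\alpha(\bar a_1-\bar a_2)$; since $a_1$ and $a_2$ are $\RR$-linearly independent (because $\bar a_1 a_2-\bar a_2 a_1=rb_3\neq 0$), we may divide to obtain $c\alpha^{-1}=\bar\alpha$, that is $c=|\alpha|^2$. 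As $|\alpha|=1$ this forces $c=1$, and $\tilde g$ takes the form \eqref{eq:Kodaira-explicit}. Conceptually, $\Gamma$ is a lattice in a Heisenberg-type group, and the identity $c=|\alpha|^2$ merely records that an automorphism scaling the two horizontal directions by $\alpha$ scales their commutator, i.e. the central direction, by $|\alpha|^2$.

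It remains to treat the normalization $\beta=0$ when $\alpha=1$, which I expect to be the main obstacle. Here I would return to the constant ($z$-independent) terms of the same conjugation relations: with $\alpha=c=1$ they require $\bar a_i\beta\in\Lambda_w$ for $i=1,2$. The crux is then to show that these membership conditions, together with the commutator relation $\bar a_1 a_2-\bar a_2 a_1=rb_3$ tying $\Lambda_w$ to $\Lambda_z$, force $\beta\in\Lambda_z$ (equivalently, that $\bar g$ is the identity on $C$), after which $\beta$ is absorbed into a deck transformation. Unlike the computation of $c$, this step rests on the fine arithmetic of the interplay of the two lattices rather than on a single linear-algebra identity, and is where the special structure of the Kodaira lattice must be used in earnest.
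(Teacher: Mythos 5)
The paper does not actually prove this lemma --- it is quoted as a special case of \cite[Proposition~6.4]{FujimotoNakayama} --- so your attempt has to stand entirely on its own. The reduction to the normal form $(z,w)\mapsto(\alpha z+\beta,\,cw+F(z))$ is fine, but your derivation of $c=1$ is not: subtracting the conjugation relations for $i=1$ and $i=2$ does \emph{not} cancel the contribution of $F$, because those contributions are $F(z+a_1)-F(z)$ and $F(z+a_2)-F(z)$, which are different functions. The conclusion is nevertheless correct, and your own closing remark contains the right argument: conjugating the relation $\updelta_1\updelta_2\updelta_1^{-1}\updelta_2^{-1}=\updelta_3^{r}$ by $\tilde g$ gives $c\cdot rb_3=|\alpha|^2\, rb_3$, hence $c=1$. (Alternatively, each relation forces $F(z+a_i)-F(z)$ to be affine, so $F''$ is doubly periodic and $F$ is a quadratic polynomial, and comparing linear coefficients for $i=1,2$ with $\bar a_1a_2-\bar a_2a_1\neq 0$ again yields $c=1$.) Either of these should replace the ``subtraction'' step.

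The genuine gap is the last step, which you yourself flag as ``the crux'' and do not carry out; and the route you propose for it cannot work. First, your membership conditions are misstated: with $\alpha=c=1$ the relations force $F$ to be affine, $F(z)=\delta z+\epsilon$, and the correct conditions read $\delta a_i-\bar a_i\beta\in\ZZ b_3+\ZZ b_4$ for $i=1,2$ --- the linear part $\delta$ of $F$ cannot be dropped. Second, these conditions do \emph{not} force $\beta\in\ZZ a_1+\ZZ a_2$: take $\beta=a_1b_4/(rb_3)$ and $\delta=\bar a_1b_4/(rb_3)$; then $\delta a_1-\bar a_1\beta=0$ and $\delta a_2-\bar a_2\beta=(\bar a_1a_2-\bar a_2a_1)b_4/(rb_3)=b_4$, so the affine map $(z,w)\mapsto(z+\beta,\,w+\delta z)$ genuinely normalizes $\Gamma$ (it fixes $\updelta_1,\updelta_3,\upgamma$ and sends $\updelta_2$ to $\upgamma\updelta_2$), yet $\beta$ need not lie in $\ZZ a_1+\ZZ a_2$. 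Since normalizing $\Gamma$ is \emph{equivalent} to descending to an automorphism of $X$, there is no further constraint available to your argument, so ``fine arithmetic of the two lattices'' cannot close the gap. This computation in fact appears to contradict the clause ``$\beta=0$ if $\alpha=1$'' as stated, so before investing more effort you should compare carefully with the precise formulation and hypotheses of \cite[Proposition~6.4]{FujimotoNakayama}; in any case, as written your proposal does not prove the statement.
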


\begin{corollary}\label{corollary:primary-Kodaira-translation-on-C}
Let $X$ be a primary Kodaira surface, and let~\mbox{$\phi\colon X\to C$} be its algebraic reduction.
Let $g$ be an automorphism of $X$ that acts by a translation on
$C$. Then $g$ is fiberwise with respect to $\phi$ (so that the action of $g$ on $C$ is trivial).
\end{corollary}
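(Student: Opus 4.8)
The plan is to reduce everything to Lemma~\ref{lemma:Aut-Kodaira-explicit}, whose content is precisely a normal form for the lift of $g$ to the universal cover $\CC^2$, together with the rigidity statement that the translation part $\beta$ in the $z$-coordinate must vanish as soon as the multiplier $\alpha$ equals~$1$. The whole argument is then a matter of correctly translating the hypothesis ``$g$ acts by a translation on~$C$'' into the condition $\alpha(g)=1$ in the notation set up just before Lemma~\ref{lemma:Aut-Kodaira-explicit}.

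First I would recall that, as explained before Lemma~\ref{lemma:Aut-Kodaira-explicit}, the automorphism $\bar g$ of the elliptic curve~$C$ induced by~$g$ acts on the one-dimensional space $H^0(C,T_C)$ by multiplication by a complex number $\alpha=\alpha(g)$ with $|\alpha|=1$. The key elementary observation is that an automorphism of an elliptic curve is a translation if and only if its differential is the identity: a general automorphism is the composition of a translation with a group automorphism, and the group automorphism contributes a nontrivial root of unity to the differential unless it is trivial, whereas translations have everywhere-trivial differential. Hence the assumption that $g$ acts on~$C$ by a translation is exactly the condition $\alpha(g)=1$.

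Next I would invoke Lemma~\ref{lemma:Aut-Kodaira-explicit} with $\alpha=1$. The lemma then provides a lift of $g$ to an automorphism of the universal cover $\CC^2$ of the form
\begin{equation*}
(z,w)\mapsto (z,\, w+F(z)),
\end{equation*}
because the hypothesis $\alpha=1$ forces the translation term $\beta$ to vanish. In particular this lift fixes the $z$-coordinate.

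Finally I would use that, by the description following~\eqref{eq:Kodaira-quotient}, the algebraic reduction $\phi\colon X\to C$ is induced by the projection of $\CC^2$ onto the $z$-coordinate, so that its fibers are the images of the lines $z=\mathrm{const}$. Since the lift of~$g$ preserves each such line, $g$ maps every fiber of~$\phi$ to itself; equivalently, $\bar g$ is the identity on~$C$. This shows simultaneously that $g$ is fiberwise with respect to~$\phi$ and that the translation by which $g$ acts on~$C$ is in fact trivial, as claimed. The only genuinely load-bearing input is the vanishing $\beta=0$ in Lemma~\ref{lemma:Aut-Kodaira-explicit}; everything else is bookkeeping about how $\alpha(g)$, the projection defining~$\phi$, and the notion of ``fiberwise'' fit together, so I do not anticipate a serious obstacle beyond stating this matching carefully.
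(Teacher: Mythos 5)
Your proof is correct and follows exactly the paper's own argument: the hypothesis that $g$ acts by a translation on $C$ gives $\alpha(g)=1$, Lemma~\ref{lemma:Aut-Kodaira-explicit} then forces $\beta=0$ in the normal form~\eqref{eq:Kodaira-explicit}, and since $\phi$ is induced by the projection onto the $z$-coordinate the lift $(z,w)\mapsto(z,w+F(z))$ is fiberwise. You merely spell out the elementary fact that a translation of an elliptic curve has trivial differential, which the paper leaves implicit.
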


\begin{proof}
In the notation of Lemma~\ref{lemma:Aut-Kodaira-explicit}, one has
$\alpha=1$. Therefore, we see from~\eqref{eq:Kodaira-explicit}
that the action of $g$ is fiberwise with respect to~$\phi$.
\end{proof}

\begin{corollary}\label{corollary:primary-Kodaira-Delta}
Let $X$ be a primary Kodaira surface, and let~\mbox{$\phi\colon X\to C$} be its algebraic reduction.
Then in the notation of~\eqref{eq:elliptic-fibration} the group $\Delta$ is finite.
\end{corollary}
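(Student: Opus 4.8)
The plan is to combine the structure of the automorphism group of the elliptic curve $C$ with Corollary~\ref{corollary:primary-Kodaira-translation-on-C}. Fixing a point of $C$ as the origin makes $C$ into a group, and $\Aut(C)$ fits into a split exact sequence
\begin{equation*}
1\to C\to\Aut(C)\to \Aut(C)_{\circ}\to 1,
\end{equation*}
whose kernel is the normal subgroup of translations and whose quotient $\Aut(C)_{\circ}$ is the stabilizer of the origin. The group $\Aut(C)_{\circ}$ is finite (of order $2$, $4$, or $6$, according to the $j$-invariant of $C$). Thus it suffices to show that the composite homomorphism $\Delta\hookrightarrow\Aut(C)\to\Aut(C)_{\circ}$ is injective, or equivalently that $\Delta$ contains no non-trivial translation of $C$.

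First I would use the definition of $\Delta$ from~\eqref{eq:elliptic-fibration}: every element of $\Delta$ is of the form $\bar{g}$ for some $g\in\Aut(X)$. Suppose that $\bar{g}\in\Delta$ is a translation of $C$. Then $g$ is an automorphism of $X$ acting by a translation on $C$, so Corollary~\ref{corollary:primary-Kodaira-translation-on-C} applies and shows that $g$ is fiberwise with respect to $\phi$; in particular $\bar{g}$ is the identity. Hence the only translation contained in $\Delta$ is the identity, that is, $\Delta\cap C=\{1\}$.

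It then follows that the restriction to $\Delta$ of the projection $\Aut(C)\to\Aut(C)_{\circ}$ has trivial kernel, so $\Delta$ embeds into the finite group $\Aut(C)_{\circ}$ and is therefore finite (indeed $|\Delta|\le 6$). The argument is essentially immediate once the preceding corollary is in hand, and there is no genuine obstacle beyond correctly isolating the translations inside $\Delta$ and invoking the split structure of $\Aut(C)$. The one point deserving care is the logical direction of Corollary~\ref{corollary:primary-Kodaira-translation-on-C}: it guarantees that acting by a translation on the base forces an automorphism of $X$ to act trivially on the base, which is precisely what excludes non-trivial translations from $\Delta$.
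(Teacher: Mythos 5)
Your proof is correct and follows essentially the same route as the paper: the paper's proof simply observes that Corollary~\ref{corollary:primary-Kodaira-translation-on-C} excludes non-trivial translations from $\Delta$, and then relies (implicitly, via the exact sequence recorded in Lemma~\ref{lemma:elliptic-curve}) on the fact that the translations have finite index in $\Aut(C)$. You have merely spelled out the final step in more detail.
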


\begin{proof}
It follows from Corollary~\ref{corollary:primary-Kodaira-translation-on-C} that
$\Delta$ does not contain elements that act by translations on~$C$.
\end{proof}

Now we deal with secondary Kodaira surfaces.

\begin{lemma}\label{lemma:secondary-Kodaira}
Let $Y$ be a secondary Kodaira surface, and let~\mbox{$\phi\colon Y\to \PP^1$} be its algebraic reduction.
Then
in the notation of~\eqref{eq:elliptic-fibration} the group
$\Aut(Y)_\phi$ contains the group of points of an elliptic curve
(so that in particular $\Aut(Y)_\phi$ and $\Aut(Y)$ have unbounded finite subgroups).
On the other hand, the group $\Delta$ is finite.
\end{lemma}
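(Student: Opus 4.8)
The plan is to handle the secondary Kodaira surface $Y$ by lifting everything to the primary Kodaira surface $X$ that covers it. Write $Y=X/G$, where $G\cong\ZZ/m\ZZ$ acts freely on $X$, and recall that $\CC^2$ is the common universal cover of both $X$ and $Y$, with $\pi_1(Y)$ containing $\Gamma=\pi_1(X)$ as a normal subgroup of index $m$. The algebraic reduction $\phi\colon Y\to\PP^1$ is, as in the primary case, induced by the projection $(z,w)\mapsto z$ of $\CC^2$ onto the $z$-coordinate; the extra generator of $\pi_1(Y)/\Gamma$ acts on the $z$-line by a nontrivial finite-order rotation, which is exactly what turns the elliptic base of the primary reduction into $\PP^1$.

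For the first assertion, I would reuse the construction from Lemma~\ref{lemma:primary-Kodaira}. The additive action~\eqref{eq:additive} of $\CC$ on $\CC^2$ given by $(z,w)\mapsto(z,w+t)$ commutes with $\Gamma$; the point is that it also commutes with the full group $\pi_1(Y)$, since the generator of $G$ acts on $\CC^2$ by a formula of type~\eqref{eq:Kodaira-explicit} with $\alpha$ a root of unity, and such transformations commute with translations in the $w$-direction. Hence this $\CC$-action descends to $Y=\CC^2/\pi_1(Y)$, and since it preserves each fiber of the projection to the $z$-coordinate, it is fiberwise with respect to $\phi$. The kernel of the induced map $\CC\to\Aut(Y)_\phi$ is the intersection of $\pi_1(Y)$ with the $w$-translation subgroup, a lattice, so the image is again (the group of points of) an elliptic curve sitting inside $\Aut(Y)_\phi$. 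This gives unboundedness of $\Aut(Y)_\phi$ and therefore of $\Aut(Y)$.

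For the finiteness of $\Delta$, the strategy is to show that $\Delta$ contains no elements of infinite order by analyzing its action on $\PP^1$. Every automorphism $g$ of $Y$ lifts to an automorphism $\tilde g$ of $\CC^2$ normalizing $\pi_1(Y)$, and the induced action on the $z$-coordinate is an affine map $z\mapsto\alpha z+\beta$; this descends to the automorphism of $\PP^1$ that $g$ induces via $\phi$. The main obstacle, which I expect to be the crux of the argument, is to rule out elements of $\Delta$ of infinite order, i.e. affine transformations $z\mapsto\alpha z+\beta$ of infinite order that are compatible with descent to $\PP^1$. Here I would combine two inputs: first, an analog of Corollary~\ref{corollary:primary-Kodaira-translation-on-C}, namely that an automorphism acting as a pure translation ($\alpha=1$) on the $z$-line must be fiberwise (by the $\beta=0$ clause in~\eqref{eq:Kodaira-explicit}), so $\Delta$ contains no translation part in an appropriate sense; second, the constraint $|\alpha|=1$ coming from the fact that $\tilde g$ must normalize the lattice generated by $\Gamma$'s translation parts, so the rotation factors $\alpha$ lie on the unit circle. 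A parabolic element ($\alpha=1$, $\beta\neq0$) is excluded by the first input, and an element with $|\alpha|=1$, $\alpha\neq1$ fixing a point of $\PP^1$ generates an infinite group only if $\alpha$ is not a root of unity; ruling this out requires that $\tilde g$ normalize the full discrete group $\pi_1(Y)$, which forces $\alpha$ to permute the finitely many $z$-coordinates of the fixed loci of the rotation generator of $G$ together with $0$ and $\infty$, hence $\alpha$ has finite order. I would assemble these observations to conclude that every element of $\Delta$ has finite order and that $\Delta$ acts on $\PP^1$ through a finite group, giving finiteness of $\Delta$.
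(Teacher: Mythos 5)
Your treatment of the first assertion is essentially the paper's argument: the generator of $G$ lifts to a transformation of the form~\eqref{eq:Kodaira-explicit}, which commutes with the $w$-translations~\eqref{eq:additive}, so the fiberwise elliptic-curve action from Lemma~\ref{lemma:primary-Kodaira} descends to $Y$. That part is correct.

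The finiteness of $\Delta$ is where there is a genuine gap. Your plan is to show that every element of $\Delta$ has finite order and then conclude that $\Delta$ is finite, but a subgroup of $\Aut(\PP^1)=\PGL_2(\CC)$ all of whose elements have finite order can perfectly well be infinite --- take the group of all maps $z\mapsto\zeta z$ with $\zeta$ a root of unity, which is exactly the kind of group responsible for the unboundedness of $\Delta$ for secondary Hopf surfaces in Lemma~\ref{lemma:secondary-Hopf}. So the final step ``every element of $\Delta$ has finite order and $\Delta$ acts on $\PP^1$ through a finite group'' does not follow from what precedes it; the second half of that sentence is the whole statement to be proved. The argument the paper uses is global rather than element-by-element: the fibration $\phi$ has at least three multiple fibers. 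Indeed, these lie over the branch points of the quotient map $C\to C/\bar G\cong\PP^1$ from the elliptic base of the primary cover, and the Riemann--Hurwitz relation $\sum_i\left(1-1/e_i\right)=2$ forces at least three branch points for every possible $\bar G$. Since $\Delta$ must permute the multiple fibers, it preserves a subset of $\PP^1$ of cardinality at least three, and the stabilizer of such a set in $\PGL_2(\CC)$ is finite. Your observation that a lift $\tilde g$ must permute the fixed loci of the rotation generator of $G$ is in fact the germ of this argument, but it has to be applied to the whole group $\Delta$ at once, producing a finite invariant set downstairs on $\PP^1$, rather than used to bound the order of each individual $\alpha$. (Also, the points $0$ and $\infty$ play no distinguished role on the base of the algebraic reduction of a Kodaira surface; that feature belongs to the Hopf-surface picture, where the pencil $\mu x^k=\nu y^r$ singles them out.)
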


\begin{proof}
There exists a primary Kodaira surface $X$
such that $Y$ is a quotient of $X$ by a finite
cyclic group $G$.
By Lemma~\ref{lemma:Aut-Kodaira-explicit},
the action of a generator of $G$ is induced by an automorphism
of the universal cover $\CC^2$ of $X$ (and $Y$)
of the form~\eqref{eq:Kodaira-explicit}. The latter transformation
commutes
with the action of the additive group $\CC$ on $\CC^2$
given by~\eqref{eq:additive}. Similarly to the proof of
Lemma~\ref{lemma:primary-Kodaira}, this gives an action of
the group of points of an elliptic curve on $Y$ that is fiberwise
with respect to~$\phi$.

To prove that
the group $\Delta$ is finite it is enough to notice that
$\phi$ has at least~$3$ multiple fibers (see for instance
the proof of~\mbox{\cite[Lemma~4.4]{FujimotoNakayama}}, or
the proof of~\mbox{\cite[Lemma~7.1]{ProkhorovShramov-CCS}}).
\end{proof}

\section{$K3$ surfaces, complex tori, and bielliptic surfaces}
\label{section:K3}

In this section we consider automorphism groups of complex $K3$ surfaces, complex tori, and bielliptic surfaces.
Recall that the image of the algebraic reduction of a complex $K3$ surface (as well as the image of
any other elliptic fibration on a $K3$ surface) is~$\PP^1$.

The following assertion is well-known to experts.
We provide its proof for the reader's convenience.

\begin{lemma}\label{lemma:K3-isotrivial}
Let $X$ be a complex $K3$ surface,
and let $\phi\colon X\to \PP^1$ be an elliptic fibration.
Then $\phi$ has at least three singular fibers.
\end{lemma}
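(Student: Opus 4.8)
The plan is to read off the number of singular fibers from two rigidity constraints on $\phi$: its total Euler number and the monodromy of the smooth part, with a ramification count for the $j$-invariant needed in the borderline case. First I would record that, by the canonical bundle formula for elliptic surfaces together with $K_X\cong\OOO_X$ and $\chi(\OOO_X)=2$, the fibration $\phi$ has \emph{no multiple fibers}; hence every fiber that is not a smooth elliptic curve is one of Kodaira's degenerate fibers, which has strictly positive topological Euler number and \emph{nontrivial} local monodromy in $\SL_2(\ZZ)$. Writing $\Sigma\subset\PP^1$ for the finite set of points with singular fibers and using that $\chit$ of a smooth elliptic fiber is $0$, the fibration formula gives $\chit(X)=\sum_{p\in\Sigma}\chit(\phi^{-1}(p))=24$, so in particular $\Sigma\neq\varnothing$.

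Next I would bound $|\Sigma|$ below by monodromy. The local monodromies $M_p$, $p\in\Sigma$, are nontrivial and satisfy $\prod_{p}M_p=1$, since their product represents a contractible loop on $\PP^1\setminus\Sigma$. If $|\Sigma|=1$ then $\PP^1\setminus\Sigma\cong\CC$ is simply connected, forcing the single local monodromy to be trivial, a contradiction; this gives $|\Sigma|\ge 2$.

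The main work is to exclude $|\Sigma|=2$. Here $\pi_1(\PP^1\setminus\Sigma)\cong\ZZ$, so the global monodromy is cyclic, generated by a single $M\in\SL_2(\ZZ)$, and the two local monodromies are $M$ and $M^{-1}$. I would split according to the order of $M$. If $M$ has finite order the family is isotrivial and both fibers are of additive (potentially good reduction) type, each of Euler number at most $10$; then $\chit(X)\le 20<24$, a contradiction. If $M$ has infinite order then, since every Kodaira local monodromy is either of finite order or parabolic, $M$ is parabolic, both fibers are of type $I_n$ or $I_n^*$ with $n\ge 1$, and so the $j$-invariant, viewed as a rational map $j\colon\PP^1\to\PP^1$, has poles exactly at the two points of $\Sigma$.

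The hard part is the numerical contradiction in this last case, and I expect it to be the real obstacle. Applying Riemann--Hurwitz to $j$ of degree $d$ gives total ramification $2d-2$; the two poles contribute $d-2$ over $j=\infty$, while the standard constraints that the ramification indices of $j$ over the special values $0$ and $1728$ be divisible by $3$ and by $2$ respectively force at least $2d/3$ and $d/2$ further ramification. Summing these yields more than $2d-2$, which is impossible for $d\ge 1$. The one genuinely nonformal input is the divisibility of these ramification indices, coming from the extra automorphisms of the elliptic curves with $j=0$ and $j=1728$ (equivalently, from the orbifold structure of the modular curve); I would justify it either by invoking Kodaira's theory of the functional and homological invariants or directly from the local normal forms of the fibers. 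Combining all cases gives $|\Sigma|\ge 3$.
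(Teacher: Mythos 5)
Your argument is correct, but it takes a genuinely different route from the paper's. The paper stays entirely inside cohomology: it combines the Euler number count
$24=\chit(X)=\sum\chit(F_i)\le\sum(n_i+1)$
with the observation that the irreducible fiber components, after dropping one component per singular fiber, span a negative-definite sublattice of $H^2(X,\ZZ)$, whose signature $(3,19)$ forces $\sum(n_i-1)\le 19$; adding the two inequalities gives $2k\ge 5$ in two lines, with no case analysis and no need to discuss multiple fibers or monodromy. You instead rule out $|\Sigma|\le 2$ through the monodromy representation of $\pi_1(\PP^1\setminus\Sigma)$ and, in the parabolic case, a Riemann--Hurwitz count for the functional invariant $j$; this requires three extra standard inputs (the canonical bundle formula to exclude multiple fibers, the classification of local monodromies, and the divisibility of the ramification indices of $j$ over $0$ and $1728$ at smooth fibers), all of which you correctly identify, and your case analysis is complete: the bound $\chit\le 10$ for a finite-order-monodromy fiber (attained by type $II^*$) gives $\le 20<24$ when $|\Sigma|=2$, and the estimate $(d-2)+\tfrac{2d}{3}+\tfrac{d}{2}>2d-2$ holds for every $d\ge 1$. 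What your approach buys is independence from the lattice structure of $H^2(X,\ZZ)$ and from $\Pic(X)$ --- it is the classical ``non-isotrivial elliptic fibrations over $\PP^1$ have at least three singular fibers'' argument, sharpened by $\chit(X)=24$ to also handle the isotrivial case --- whereas the paper's proof is shorter and exploits precisely the $K3$-specific signature $(3,19)$. Both ultimately hinge on $\chit(X)=24$.
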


\begin{proof}
Let $F_1,\ldots,F_k$ be all singular fibers of $\phi$.
Denote by $n_i$ the number of irreducible components of~$F_i$.
One can see from the classification of
degenerate fibers of elliptic fibrations (see~\cite[\S\,V.7]{BHPV-2004})
that $\chit(F_i)\le n_i+1$. This gives
\begin{equation}\label{eq:24}
24=\chit(X)=\sum \chit(F_i)\le \sum (n_i+1)=\sum n_i +k.
\end{equation}

Now for every $i$ choose $C_i$ to be one of the irreducible components
of $F_i$, and consider the lattice $\Lambda$ in $\Pic(X)$ generated by all
the irreducible components of all singular fibers~$F_i$ except for the curves $C_i$, $1\le i\le k$.
Then the rank of $\Lambda$ equals $\sum (n_i-1)$, and the intersection form
is negative-definite on $\Lambda$. On the other hand, the signature of the intersection
form on the lattice $H^2(X,\ZZ)$ is $(3,19)$, see e.g.~\mbox{\cite[Chapter~VIII]{BHPV-2004}}.
This gives
$$
19\ge \sum (n_i-1)=\sum n_i -k.
$$
Combining the latter inequality with~\eqref{eq:24}, we obtain
$2k\ge 5$, so that $k\ge 3$.
\end{proof}

In the case when $X$ is a $K3$ surface of algebraic dimension $1$, the assertion of 
Lemma~\ref{lemma:K3-isotrivial} is implied by a much more general~\mbox{\cite[Proposition~A.1]{CHL}}.

\begin{corollary}\label{corollary:K3}
Let $X$ be a complex $K3$ surface,
and let $\phi\colon X\to \PP^1$ be an elliptic fibration.
Let~\mbox{$\Gamma\subset\Aut(X)$} be a group such that $\phi$ is $\Gamma$-equivariant.
Let $\Gamma_\phi\subset\Gamma$ be the subgroup of all transformations
that are fiberwise with respect to $\phi$, and 
let~\mbox{$\Delta=\Gamma/\Gamma_\phi$}.
Then the group~$\Gamma_\phi$ has bounded finite subgroups, while 
the group~$\Delta$ is finite.
\end{corollary}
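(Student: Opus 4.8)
The plan is to deduce Corollary~\ref{corollary:K3} from
Lemma~\ref{lemma:K3-isotrivial} by analyzing separately the fiberwise part
$\Gamma_\phi$ and the quotient $\Delta$ acting on the base $\PP^1$. These two
assertions are essentially independent, so I would treat them one at a time.

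For the finiteness of $\Delta$, the key point is that $\Delta$ is a subgroup of
$\Aut(\PP^1)=\PGL_2(\CC)$ that must preserve the finite set of points in $\PP^1$
over which $\phi$ has singular fibers. By Lemma~\ref{lemma:K3-isotrivial} this set
has at least three elements. A subgroup of $\PGL_2(\CC)$ preserving a finite set
of cardinality at least $3$ is finite, since an automorphism of $\PP^1$ fixing
three distinct points is the identity; hence the homomorphism from $\Delta$ to the
symmetric group on these points has trivial kernel, giving $|\Delta|<\infty$. I
would phrase this carefully to ensure $\Delta$ really does permute the singular
fibers: every element of $\Gamma$ preserves $\phi$ up to the induced action on the
base, hence sends singular fibers to singular fibers, so its image in
$\Aut(\PP^1)$ permutes the corresponding points.

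For the boundedness of finite subgroups of $\Gamma_\phi$, the natural approach is
to use that $\Gamma_\phi$ acts faithfully on a general (smooth) fiber $F$, which is
an elliptic curve. Since the elements of $\Gamma_\phi$ fix every fiber and act
compatibly, a finite subgroup $G\subset\Gamma_\phi$ restricts to a group of
automorphisms of $F$. The subtlety is that $G$ could a priori act trivially on the
general fiber while acting nontrivially on the surface; one must rule this out, for
instance by noting that an automorphism fiberwise-trivial on a dense set of fibers
is trivial, or by invoking faithfulness of the restriction for an element of finite
order. Granting faithfulness, finite subgroups of $\Aut(F)$ for an elliptic curve
$F$ have order bounded by a universal constant (at most $24$, coming from the
translations composed with the group of automorphisms fixing the origin, whose
order divides $24$), which yields the desired bound $B$ independent of $G$.

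The main obstacle I anticipate is the faithfulness step for $\Gamma_\phi$: one must
argue that a nontrivial element of a finite subgroup of $\Gamma_\phi$ cannot act
trivially on the general fiber, so that restriction to $F$ embeds finite subgroups
of $\Gamma_\phi$ into $\Aut(F)$. This is where some care is needed, since the
action on individual fibers must be pieced together into a faithful action on a
single general fiber; I would handle it by observing that the locus where a given
automorphism acts trivially is closed and, if it meets the general fiber, forces
triviality on an open dense subset and hence everywhere. Once this is in place, the
bound on $|\Aut(F)|$ for elliptic curves $F$ finishes the argument, and the
finiteness of $\Delta$ completes the proof.
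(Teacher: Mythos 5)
Your argument for the finiteness of $\Delta$ is correct and is exactly the paper's: by Lemma~\ref{lemma:K3-isotrivial} there are at least three singular fibers, $\Delta$ permutes the corresponding points of $\PP^1$, and a subgroup of $\PGL_2(\CC)$ preserving a set of three or more points is finite.

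However, your argument for the boundedness of finite subgroups of $\Gamma_\phi$ contains a genuine error. You reduce to finite subgroups of $\Aut(F)$ for a smooth fiber $F$ (an elliptic curve) and claim these have order at most $24$. This is false: $\Aut(F)$ contains all translations of $F$, and translations by the $n$-torsion points form a finite subgroup of order $n^2$ for every $n$, so $\Aut(F)$ has \emph{unbounded} finite subgroups (this is precisely the exact sequence $1\to F\to\Aut(F)\to\ZZ/n\ZZ\to 1$ with $n\le 6$ recorded in Lemma~\ref{lemma:elliptic-curve}; only the subgroup fixing a point is bounded). Your purely fiberwise argument therefore proves nothing, and indeed it cannot be repaired along these lines: for Hopf and Kodaira surfaces the group $\Aut(X)_\phi$ contains the group of points of an elliptic curve acting by translations along the fibers and has unbounded finite subgroups (Lemmas~\ref{lemma:primary-Hopf}, \ref{lemma:secondary-Hopf}, \ref{lemma:primary-Kodaira}), so the boundedness for $K3$ surfaces must come from global geometry specific to $K3$'s, not from the structure of a single fiber. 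The paper's proof instead invokes the fact that the \emph{entire} group $\Aut(X)$ of a complex $K3$ surface has bounded finite subgroups (\cite[Lemma~8.8]{ProkhorovShramov-CCS}, which rests on the action on the holomorphic $2$-form and on $H^2(X,\ZZ)$), and the assertion for the subgroup $\Gamma_\phi\subset\Aut(X)$ follows immediately. You need some such global input; without it the claim for $\Gamma_\phi$ is unproved.
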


\begin{proof}
The assertion for the group $\Gamma_\phi$ follows from the fact that the whole group
$\Aut(X)$ has bounded finite subgroups, see e.g.~\cite[Lemma~8.8]{ProkhorovShramov-CCS}.
Furthermore, by Lemma~\ref{lemma:K3-isotrivial} the fibration $\phi$ has at least three
singular fibers, so that there is a finite $\Delta$-invariant subset of~$\PP^1$ of cardinality at least three.
The latter is impossible for an infinite group~$\Delta$.
\end{proof}

\begin{remark}\label{remark:tori}
Let $X$ be a non-projective complex torus of positive algebraic dimension, and let
$\phi\colon X\to C$ be its algebraic reduction. Then $C$ is a complex torus as well.
Fix a point $O\in C$, and a point $\hat{O}\in X$ such that
$\phi(\hat{O})=O$. Then $\phi$ can be thought of as the quotient morphism of $X$ by some complex subtorus $E\subset X$.
Let us use the notation of~\eqref{eq:elliptic-fibration}.
Since the group of points of $E$ is contained in $\Aut(X)_\phi$, we see that
$\Aut(X)_\phi$ has unbounded finite subgroups. Furthermore, since
$\phi$ induces a surjective homomorphism on the groups of torsion points of $X$ and $C$,
we see that the group $\Delta$ has unbounded finite subgroups as well.
\end{remark}

We will need the following well known fact concerning elliptic curves.

\begin{lemma}\label{lemma:elliptic-curve}
Let $C$ be an elliptic curve, and $G\subset\Aut(C)$ be a non-trivial finite group. Suppose that the action of
$G$ on $C$ is not free. Then the normalizer of $G$ in $\Aut(C)$ is finite.
\end{lemma}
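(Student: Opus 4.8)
The plan is to analyze the structure of $\Aut(C)$ for an elliptic curve and exploit the rigidity coming from the non-free action. Recall that $\Aut(C)$ fits into an exact sequence
\begin{equation*}
1\to C\to\Aut(C)\to\Aut(C,O)\to 1,
\end{equation*}
where $C$ denotes the group of translations (the group of points of the elliptic curve) and $\Aut(C,O)$ is the finite group of automorphisms fixing the origin $O$. The subgroup $\Aut(C,O)$ is cyclic of order dividing $6$ (being $\ZZ/2\ZZ$, $\ZZ/4\ZZ$, or $\ZZ/6\ZZ$ depending on the $j$-invariant). Every element $g\in\Aut(C)$ thus has the form $g(x)=\mu(x)+c$ for a unique $\mu\in\Aut(C,O)$ and a unique translation $c\in C$.

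First I would observe that since $G$ does not act freely, some nontrivial element $h\in G$ has a fixed point; writing $h(x)=\mu(x)+c$ with $\mu\neq\mathrm{id}$ (if $\mu=\mathrm{id}$ then $h$ is a nonzero translation and acts freely), the existence of a fixed point means $h$ is conjugate in $\Aut(C)$ to a genuine automorphism fixing a point, i.e. to an element of some conjugate of $\Aut(C,O)$. The key point is that the linear part $\mu$ of $h$ is nontrivial. Next I would examine how the normalizer $N=\Norm_{\Aut(C)}(G)$ acts. For $g\in N$ and $h\in G$ as above, conjugation $ghg^{-1}$ must again lie in $G$, hence must be an automorphism with nontrivial linear part. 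Writing $g(x)=\nu(x)+d$, a direct computation shows that the linear part of $ghg^{-1}$ equals $\nu\mu\nu^{-1}=\mu$ (since $\Aut(C,O)$ is abelian), so the linear parts are preserved, and the translation part of $ghg^{-1}$ is $d-\mu(d)+\nu(c) = (\mathrm{id}-\mu)(d)+\nu(c)$.

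The main computation is then to intersect $N$ with the translation subgroup $C$: I would show $N\cap C$ is finite. A translation $t_d$ (i.e. $g=t_d$, so $\nu=\mathrm{id}$, $d$ arbitrary) normalizes $G$ only if for the element $h$ with nontrivial linear part $\mu$ one has $t_d\, h\, t_d^{-1}\in G$; but $t_d h t_d^{-1}$ has the same linear part $\mu$ and translation part $c+(\mathrm{id}-\mu)(d)$. For this to lie in the finite set $G$, the point $(\mathrm{id}-\mu)(d)$ must take one of finitely many values, and since $\mu\neq\mathrm{id}$ the endomorphism $\mathrm{id}-\mu$ of $C$ is an isogeny (it is nonzero because $\mu$ has no eigenvalue $1$, its fixed points being finite), hence has finite kernel. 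Therefore $d$ ranges over a finite set, proving $N\cap C$ is finite. Finally, the image of $N$ in the finite group $\Aut(C,O)$ is automatically finite, so $N$ is an extension of a finite group by the finite group $N\cap C$, hence finite.

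The step I expect to be the main obstacle is verifying cleanly that $\mathrm{id}-\mu$ is an isogeny, i.e. that it has finite kernel rather than being the zero map. This rests on $\mu\neq\mathrm{id}$ forcing $\mu$ to act on the tangent space $H^0(C,T_C)$ as multiplication by a root of unity different from $1$, so that $\mathrm{id}-\mu$ is a nonzero endomorphism of $C$ and therefore an isogeny with finite kernel. Once this is in hand, the finiteness of $N\cap C$ and hence of $N$ follows from the extension structure above.
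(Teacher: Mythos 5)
Your proof is correct and follows essentially the same route as the paper, which simply invokes the exact sequence $1\to C\to\Aut(C)\to\ZZ/n\ZZ\to 1$ with $n\le 6$ and leaves the details to the reader. You have merely filled in the computation the paper omits (the key point being that $\mathrm{id}-\mu$ is an isogeny for $\mu\neq\mathrm{id}$, so the translation part of the normalizer is finite), and that verification is sound.
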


\begin{proof}
This follows from the existence of the exact sequence
$$
1\to C\to \Aut(C)\to \ZZ/n\ZZ\to 1,
$$
where $C$ is identified with the group of its points acting by translations on $C$,
and~\mbox{$n\le 6$}.
\end{proof}

Now we pass to the case of bielliptic surfaces. Note that such a surface is always projective.
However, its Albanese map provides a natural structure of an elliptic fibration over an elliptic curve.

\begin{lemma}\label{lemma:bielliptic}
Let $X$ be a bielliptic surface,
and $\phi\colon X\to C$ be its Albanese fibration. Then the group
$\Aut(X)$ has unbounded finite subgroups. Furthermore,
in the notation of~\eqref{eq:elliptic-fibration} the group
$\Aut(X)_\phi$ is finite, while
the group $\Delta$ contains the group of points of an elliptic curve (and in particular has unbounded finite subgroups).
\end{lemma}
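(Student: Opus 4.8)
The plan is to use the Bagnera--de Franchis description of a bielliptic surface as a quotient $X\cong(E\times F)/G$, where $E$ and $F$ are elliptic curves and $G$ is a finite abelian group acting on $E$ freely by translations $e\mapsto e+t_g$ (so that $C=E/G$ is again an elliptic curve) and on $F$ by automorphisms $x\mapsto\zeta_g x+s_g$ in such a way that $F/G\cong\PP^1$, the diagonal action on $E\times F$ being free. Under this description the Albanese map $\phi\colon X\to C$ is induced by the projection $E\times F\to E$, its general fibre is $F$, and all three assertions become statements about transformations of $E\times F$ that normalise $G$. The structural feature I would repeatedly exploit is that, since $F/G\cong\PP^1$, at least one multiplier $\zeta_g$ is a nontrivial automorphism of $F$, so that $\mathrm{id}_F-\zeta_g$ is a nonzero isogeny of $F$.

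First I would treat $\Aut(X)$ and $\Delta$ together. For any $c\in E$ the translation $\tau_c\colon(e,x)\mapsto(e+c,x)$ commutes with every element of $G$ (translations of $E$ commute with one another and the $F$-coordinate is untouched), hence descends to an automorphism of $X$. This yields a homomorphism $E\to\Aut(X)$ whose kernel $\Lambda_0=\{c\mid\tau_c\in G\}$ is finite, so its image is the group of points of the elliptic curve $E/\Lambda_0$; this already shows that $\Aut(X)$ has unbounded finite subgroups. Composing with $\Aut(X)\to\Delta\subset\Aut(C)$ sends $c$ to the translation of $C=E/G$ by the class of $c$, so the image of this family in $\Delta$ is exactly the group of translations of $C$, that is, the group of points of the elliptic curve $C$. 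This settles the assertions about $\Aut(X)$ and about $\Delta$ simultaneously.

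The substantial part is the finiteness of $\Aut(X)_\phi$. Here I would first argue that any element of $\Aut(X)_\phi$ lifts to an automorphism of $E\times F$ preserving the product structure, of the form $(e,x)\mapsto(e+a,\eta x+b)$ with $a\in E$, with $\eta$ in the finite group $\Aut_0(F)$ of automorphisms of $F$ fixing the origin, and with $b\in F$: the requirement that the induced map on $C$ be trivial forces the linear part on the $E$-factor to be the identity and $a$ to lie in the finite group $\Lambda_G=\{t_g\mid g\in G\}\subset E$. Next I would impose that such a lift normalise $G$. Conjugating an element $g$ of $G$ by this transformation leaves $t_g$ and $\zeta_g$ unchanged and replaces its $F$-translation part $s_g$ by $(\mathrm{id}_F-\zeta_g)(b)+\eta(s_g)$, which must again be the $F$-translation part of some element of $G$. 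Since $G$ is finite, $(\mathrm{id}_F-\zeta_g)(b)$ then lies in a finite set; choosing $g$ with $\zeta_g\neq\mathrm{id}_F$ and using that $\mathrm{id}_F-\zeta_g$ is an isogeny, I conclude that $b$ lies in a finite set. As $a$, $\eta$, and $b$ each range over finite sets, the group $\Aut(X)_\phi$ is finite.

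\textbf{The main obstacle} is this last step, namely controlling the translation parameter $b\in F$, which a priori varies in a positive-dimensional family and is precisely what separates $\Aut(X)_\phi$ (finite) from $\Delta$ (containing a whole elliptic curve); the decisive input is the nontrivial multiplier $\zeta_g$ supplied by $F/G\cong\PP^1$, which turns $\mathrm{id}_F-\zeta_g$ into an isogeny and thereby rigidifies $b$. A secondary point I would need to verify is that every element of $\Aut(X)_\phi$ really does lift to an automorphism of $E\times F$ respecting the product decomposition, with no interchange of the two elliptic factors; this is automatic here, since such an interchange would act nontrivially on the base $C$ and so cannot occur inside $\Aut(X)_\phi$.
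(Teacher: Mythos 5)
Your decomposition $X\cong (E\times F)/G$, the translation argument for $\Aut(X)$ and $\Delta$, and the strategy for $\Aut(X)_\phi$ (lift to $E\times F$, use normalization of $G$ and a nontrivial multiplier $\zeta_g$ to rigidify the $F$-translation part) all coincide with the paper's proof; the paper packages the finiteness step by citing~\cite[Lemma~2.1]{BennettMiranda} to get $\widetilde{\Aut}(X)_\phi\subset\Aut(E)\times\Aut(F)$ and then its Lemma~\ref{lemma:elliptic-curve} on normalizers in $\Aut(F)$, whereas you carry out the equivalent conjugation computation by hand.

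There is, however, one genuine gap in your version: the claim that a lift of an element of $\Aut(X)_\phi$ automatically has the product form $(e,x)\mapsto(e+a,\eta x+b)$. A general automorphism of the abelian surface $E\times F$ fixing a point is given by a matrix with entries in $\mathrm{End}(E)$, $\mathrm{Hom}(F,E)$, $\mathrm{Hom}(E,F)$, $\mathrm{End}(F)$; the condition that the induced map on $C=E/G$ be trivial kills the first row as you say (forcing the $E$-component to be $e\mapsto e+a$ with $a$ in the translation subgroup of $G$), but it says nothing about a possible term $\gamma(e)$ with $\gamma\in\mathrm{Hom}(E,F)$ in the second coordinate, and this group is nonzero whenever $E$ and $F$ are isogenous. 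Your remark that only an ``interchange of factors'' needs to be excluded therefore does not cover all cases. The gap is fillable by the very computation you already perform: conjugating $g\in G$ by $(e,x)\mapsto(e+a,\gamma(e)+\eta x+b)$ produces an $e$-dependence $(\mathrm{id}_F-\eta\zeta_g\eta^{-1})\circ\gamma$ in the $F$-coordinate, which must vanish for the result to lie in $G$; taking $g$ with $\zeta_g\neq\mathrm{id}_F$ makes $\mathrm{id}_F-\eta\zeta_g\eta^{-1}$ an isogeny and forces $\gamma=0$. This is exactly the content the paper outsources to~\cite[Lemma~2.1]{BennettMiranda}, so the step should either be proved as above or cited.
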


\begin{proof}
One can construct $X$ as a quotient $E\times F/G$, where $E$ and $F$
are elliptic curves, and~$G$ is a finite group acting faithfully on $E$ by translations
and acting faithfully on $F$ \emph{not only} by translations, see for instance~\cite[\S\,V.5]{BHPV-2004}.

Since the group of points of $E$ acting on $E\times F$ commutes with $G$, there
is a homomorphism (with a finite kernel) from this group
to $\Aut(X)$. In particular, the group
$\Aut(X)$ has unbounded finite subgroups.
Furthermore, one has $C\cong E/G$, and the following diagram is commutative and $E$-equivariant:
$$
\xymatrix{
E\times F\ar@{->}[r]\ar@{->}[d]  & X\ar@{->}[d]^{\phi}\\
E\ar@{->}[r] & C
}
$$
In particular, the group $\Delta$ contains the group of points of the elliptic curve $C$, and thus has unbounded finite subgroups.

Consider the subgroup $\widetilde{\Aut}(X)_\phi\subset\Aut(E\times F)$
defined by the exact sequence
$$
1\to G\to\widetilde{\Aut}(X)_\phi\to\Aut(X)_\phi\to 1.
$$
By construction, the group
$G$ is a normal subgroup in $\widetilde{\Aut}(X)_\phi$. Thus it follows
from~\mbox{\cite[Lemma~2.1]{BennettMiranda}} that
$$
\widetilde{\Aut}(X)_\phi\subset\Aut(E)\times\Aut(F)\subset\Aut(E\times F),
$$
and hence the projection $E\times F\to F$ is $\widetilde{\Aut}(X)_\phi$-equivariant.
The image $\overline{\Aut}(X)_\phi$ of $\widetilde{\Aut}(X)_\phi$ in $\Aut(F)$ normalizes the group $G\subset\Aut(F)$,
hence $\overline{\Aut}(X)_\phi$ is finite by Lemma~\ref{lemma:elliptic-curve}.
On the other hand, the kernel $R$ of the homomorphism
$$
\widetilde{\Aut}(X)_\phi\to\overline{\Aut}(X)_\phi
$$
has order at most $|G|$, because the image of $R$ in $\Aut(X)_\phi$ is trivial. This
means that the group $\widetilde{\Aut}(X)_\phi$ and thus also
its quotient~\mbox{$\Aut(X)_\phi$} is finite.
\end{proof}

An alternative proof of the fact that in the notation of Lemma~\ref{lemma:bielliptic} the group~$\Aut(X)_\phi$ is finite
can be obtained from the explicit description
of the group $\Aut(X)$, which appears to contain the group of points of $E$ as a subgroup of finite index, see~\mbox{\cite[Table~3.2]{BennettMiranda}}.

Now we complete the proof of Theorem~\ref{theorem:main}.
Assertion~(ii) follows from Lemma~\ref{lemma:primary-Hopf}
together with Remark~\ref{remark:tori}.
Assertion~(iii) is given by Lemma~\ref{lemma:secondary-Hopf}.
Assertion~(iv) follows from Lemma~\ref{lemma:primary-Kodaira},  Corollary~\ref{corollary:primary-Kodaira-Delta},  and Lemma~\ref{lemma:secondary-Kodaira}.
Assertion~(v) is given by Corollary~\ref{corollary:K3}.
Assertion~(vi) is given by Lemma~\ref{lemma:bielliptic}.
Finally, assertion~(i) follows from assertions~(ii), (iii), and~(iv) together with Lemma~\ref{lemma:bielliptic}.


\end{document}